\definecolor{dblue}{rgb}{0,0,0.70}
\newtheorem{theorem}{Theorem}[section]	
\newtheorem*{theorem*}{Theorem}
\newaliascnt{lemma}{theorem}
\newtheorem{lemma}[lemma]{Lemma}
\newtheorem*{lemma*}{Lemma}
\newaliascnt{proposition}{theorem}
\newtheorem{proposition}[proposition]{Proposition}
\newaliascnt{corollary}{theorem}
\theoremstyle{remark}
\newaliascnt{remark}{theorem}
\newtheorem{remark}[remark]{Remark}
\newaliascnt{question}{theorem}
\newtheorem*{question*}{Question}
\newaliascnt{definition}{theorem}
\newtheorem{definition}[definition]{Definition}
\newaliascnt{example}{theorem}
\setlist[itemize]{leftmargin=6.5mm}
\renewcommand{\restriction}{\mathbin\upharpoonright}
\newcommand{\axiomft}[1]{\mathsf{#1}}
\newcommand{\CH}{\axiomft{CH}}
\newcommand{\PFA}{\axiomft{PFA}}
\newcommand{\MA}{\axiomft{MA}}
\newcommand{\MM}{\axiomft{MM}}
\newcommand{\TCG}{\axiomft{TCG}}
\DeclareMathOperator{\cf}{cf}
\DeclareMathOperator{\dom}{dom}
\newcommand{\forces}{\mathrel{\Vdash}}
\newcommand\PP{\mathbb{P}}
\newcommand{\power}{\mathcal{P}}
\newcommand{\BB}{\mathbb B}
\newcommand{\cB}{\mathcal B}
\newcommand{\cM}{\mathcal M}
\newcommand{\cN}{\mathcal N}
\newcommand{\cC}{\mathcal C}
\newcommand{\bd}{2-\mathrm{bd}}
\newcommand{\pto}{\to^{\text{poly}}}
\newcommand{\tup}[1]{\langle#1\rangle}
\newcommand{\normal}{\normalfont\normalsize}
\newcommand{\arxiv}[1]{\small\href{https://arxiv.org/abs/#1}{arXiv:#1}\normal}
\newcommand{\doi}[1]{\small\href{https://doi.org/#1}{DOI}\normal}
\title[MEHIFOX Notes]{Methods in Higher Forcing Axioms}
\begin{document}
\begin{abstract}
Methods of Higher Forcing Axioms was a small workshop in Norwich, taking place between 10--12 of September, 2019. The goal was to encourage future collaborations, and create more focused threads of research on the topic of higher forcing axioms. This is an improved version of the notes taken during the meeting by Asaf Karagila.
\end{abstract}
\maketitle
\section{Introduction}
David Asper\'o and Asaf Karagila hosted a small workshop in Norwich between 10--12 of September, 2019. Here the term ``workshop'' was not intended as it is usually perceived (i.e., a small conference with laser-focused talks) but rather a work-shop. The goal was to help and seed future research on forcing axioms, as well as to try and organise a rudimentary literature reading list on the various topics in higher forcing axioms.

The structure of each day was very loose: one person presented a problem, and a discussed ensued. This (scientific) report is based on the notes taken during the meeting by Asaf Karagila, and cleaned up later as a joint effort of the participants.

We (the organisers) feel that this very informal approach to the workshop is a very fruitful approach when trying to coalesce ideas from different parts of a research subject. We hope that this will encourage others to organise similar meetings to workshop similar topics in the future.

\subsection*{The Participants}
The following people attended, listed in alphabetic order.
\begin{itemize}
\item Prof.\ Uri \textbf{Abraham}, Ben-Gurion University of the Negev.
\item Dr.\ David \textbf{Asper\'o}, University of East Anglia.
\item Prof.\ Mirna \textbf{D\v{z}amonja}, University of East Anglia.
\item Dr.\ Asaf \textbf{Karagila}, University of East Anglia.
\item Dr.\ Miguel \'Angel \textbf{Mota}, Instituto Tecnol\'ogico Aut\'onomo de M\'exico.
\item Prof.\ Tadatoshi \textbf{Miyamoto}, Nanzan University.
\item Dr.\ Dorottya \textbf{Sziraki}, R\'enyi Alfr\'ed Matematikai Kutat\'oint\'ezet.
\item Prof.\ Boban \textbf{Veli\v{c}kovi\'c}, IMJ-PRG, Universit\'e de Paris.
\item Prof.\ Teruyuki \textbf{Yorioka}, Shizuoka University.
\end{itemize}

\subsection*{Acknowledgements}
This meeting would not have taken place without the generous support of the London Mathematical Society (Research Workshop Grant award no.~WS-1819-06) and the School of Mathematics at the University of East Anglia. We also want to thank Chris Lambie-Hanson and Tanmay Inamdar for their kind permission to share some of their work here.
\newpage

\section{Problem 1: David Asper\'o}
If we have $2$-types of models side-condition, as in the Neeman idea, then we have models which are countable and of size $\aleph_1$, or transitive in the proof of $\PFA$. 

So we can have a situation $M_0\in N\in M_1$, where $M_0\notin M_1$. But we do require a finite chain of models such that each model is an element of the next, and the chain is closed under intersections. Equivalently, we can assign each model in the chain an ordinal, $\delta_i=\sup(Q_i\cap\omega_2)$. Then the sequence of $\delta_i$ is increasing, and if $i<j$, $Q_i\notin Q_j$, then there is some $k$, $i<k<j$ such that $Q_k\in Q_j$ and for some $\ell<i$, $Q_\ell=Q_j\cap Q_k$.

The idea is that if $Q_i\notin Q_j$, then $Q_j$ is a small model, and there is a large model between them, and the intersection of $Q_j$ and the large model is below $Q_i$ in the chain. It might not be an element of $Q_i$, in which case there is another large model, etc.

There is a problem with ``the obvious generalisation to models of 3-types'', as Boban presents below. If we have 3-types of models and we require the chains to be closed under arbitrary intersections---as in the case of 2-types---then the natural proof of cardinal preservation breaks down. In fact, the resulting forcing in this case will not be strongly proper for at least one of the relevant classes of models.

The above configuration makes crucial use of the side conditions being closed under arbitrary intersections. Hence, the following possibility does not seem to be ruled out: consider the forcing $\PP$ of finite chains of models of 3-types satisfying the following \textit{weak closure under intersections}: for $N\in M$ in the chain and $|M|<|N|$, $N\cap M$ is in the chain. Is $\PP$ strongly proper for the relevant class of models with this property?

\begin{remark}[Added Nov.~2019] In the meantime David found a configuration which strongly suggests that, unfortunately, this forcing $\PP$ is not strongly proper either (for some of the relevant models, at least). It seems that forcing with 3-types of models, as opposed to 2-types, is bound to collapse cardinals (at least when considering the ``natural approach'').

We use $M$ for small models (say countable models), $N$ for middle-sized models (say of size $\aleph_1$) and $P$ for big models (for example of size $\aleph_2$). We say that a model $Q_1$ is \textit{above} a model $Q_0$ (or $Q_0$ is \textit{below} $Q_1$) if $\sup(Q_0\cap\omega_3)<\sup(Q_1\cap\omega_3)$. We assume that all models are closed under sequences of length less than their size.

Suppose we are proving properness for a model $M$ in some condition $s$. In $M$ we are reflecting $s$, which involves adding a model $P\in M$. Above $M$ there are models $M_1$ and $M_0$ in $s$, $M_1$ above $M_0$, both containing $P$, which means we will need to add $M_1\cap P$ and $M_0\cap P$ to the final chain. Although $M_1$ is above $M_0$, it turns out that $M_0$ is not a member of $M_1$, and in fact there is some $N\in  M_1\cap s$ above $M_0$ such that $M_1\cap N$ is below $M_0$. Suppose also that $M_0$ does not belong to $N$ and in fact there is some $P_0\in N\cap s$ above $M_0$ such that $N\cap P_0$ is below $M\cap P$ (note that $M\cap P$ is below $M_0\cap P$). Also, let us assume that $M$ is not in $M_1\cap N$ as there is some $P_1\in M_1\cap N\cap s$ such that $M_1\cap N\cap P_1$ is below $N\cap P_0$. 

Now, it seems that $M_0\cap P$ is not be a member of $M_1\cap P$, but then we need some middle-sized or big model model $Q$ in our chain such that $Q\in M_1\cap P$ and $M_1\cap P\cap Q$ is equal to or below $M_0\cap P$. But, given the above configuration---which can be certainly cooked up--- it is not clear where such a model could come from. 
\end{remark}
\newpage 
\subsection{Example of failure for 3-types of models (presented by Boban)}
Suppose that things worked fine with $3$-types models, i.e.\ there is closure under intersection and the forcing is strongly proper. We have models of size $\aleph_0,\aleph_1$, and $\aleph_2$.

If $G$ is a generic filter, then, let $P_i$ for $i<\omega_3$ be the sequence of models of size $\aleph_2$. Let $N$ be a model of size $\aleph_1$ above the first $P_\xi$, for $\xi<\omega_1$, and let $M$ be a countable model above $N$ such that $N\in M$. We let $\delta$ be large enough, and $P_\delta\notin M$.

Compare now $N\cap P_\delta$ and $M\cap P_\delta$. But now we have that $M\cap P_\delta\in N\cap P_\delta$. However, $\sup(N\cap\omega)<\sup(M\cap\omega_2)$ while $\sup(M\cap P_\delta\cap\omega_2)<\sup(N\cap P_\delta\cap\omega_2)$.

But since $\omega_2\subseteq P_\delta$, as $P_\delta$ has size $\aleph_2$, the intersection $M\cap P_\delta\cap\omega_2=M\cap P_\delta$, and same for $N$. And this is impossible.

\section{Problem 2: Tadatoshi Miyamoto}
\subsection{Motivation: Asper\'o--Mota type forcing axioms}
These are forcing axioms that deal with $\aleph_1$ dense subsets for a forcing $\PP$, which typically is of size $\aleph_1$, with finite conditions. And we can iterate using side conditions which are countable elementary submodels of $(H(\aleph_2),\in,\dots)$ up to $2^{\aleph_0}=\aleph_2$. 

We get some sort of weak negation of club guessing, depending on the variables. The forcings in this class include c.c.c.\ and other things.

Now we want to have a ``higher'' analogue. So we want to meet $\aleph_2$ dense open sets. Typically, now, we want to work with forcings of size $\aleph_2$, whose conditions are countable. The models now are $N\prec(H(\aleph_3,\in,\dots)$, such that $|N|=\aleph_1$, and $\omega_1\subseteq N$, and $N$ is $\sigma$-closed. This will provide us with $\lnot\TCG(S^2_1)$, where $\TCG$ is ``tail club guessing''.

If our class of forcing contains $\omega_1$-closed and $\omega_2$-c.c.\, then there is no forcing axiom for this class. But Shelah provided a subclass of these which does admit a forcing axiom.

\begin{definition}[$\TCG(S^2_1)$]
There is a sequence $\tup{\eta_\delta\mid\delta\in S^2_1}$ such that $\eta_\delta$ is a closed copy of $\omega_1$, cofinal in $\delta$ such that for every club $C\subseteq\omega_2$, there is $\delta\in S^2_1$ and $\delta'<\delta$ such that $\eta_\delta\setminus\delta'\subseteq C$.
\end{definition}
\begin{theorem}
$\diamondsuit(S^2_1)\implies\TCG(S^2_1)$.
\end{theorem}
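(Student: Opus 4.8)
The plan is to read a tail-club-guessing sequence directly off a $\diamondsuit(S^2_1)$-sequence. In fact the argument yields far more than is asked: each club is guessed on a \emph{stationary} set and with empty tail, so the ``there is $\delta$'' and ``$\eta_\delta\setminus\delta'$'' slack in the definition of $\TCG(S^2_1)$ will be used only trivially. Fix a $\diamondsuit(S^2_1)$-sequence $\tup{A_\delta\mid\delta\in S^2_1}$, so $A_\delta\subseteq\delta$ and for every $A\subseteq\omega_2$ the set $\set{\delta\in S^2_1\mid A\cap\delta=A_\delta}$ is stationary.

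For each $\delta\in S^2_1$ I would define $\eta_\delta$ as follows. If $A_\delta$ is cofinal in $\delta$, let $D_\delta$ be its closure in $\delta$ and let $\eta_\delta$ be the closed copy of $\omega_1$ obtained by the canonical continuous recursion through $D_\delta$: put $e(0)=\min D_\delta$, let $e(\alpha+1)$ be the least element of $D_\delta$ strictly above $e(\alpha)$, and $e(\lambda)=\sup_{\beta<\lambda}e(\beta)$ at limit $\lambda<\omega_1$. Since $\cf(\delta)=\omega_1$, any countable supremum of ordinals below $\delta$ stays below $\delta$, so the limit clauses land back in the closed set $D_\delta$; hence $e\colon\omega_1\to D_\delta$ is well defined, continuous, strictly increasing and cofinal, and I set $\eta_\delta=\rng(e)$, a closed copy of $\omega_1$ cofinal in $\delta$ with $\eta_\delta\subseteq D_\delta$. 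If $A_\delta$ is bounded in $\delta$, let $\eta_\delta$ be any fixed closed copy of $\omega_1$ cofinal in $\delta$, which exists because $\delta\in S^2_1$.

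For the verification, let $C\subseteq\omega_2$ be club. The set $L$ of limit points of $C$ below $\omega_2$ is club, while $\set{\delta\in S^2_1\mid C\cap\delta=A_\delta}$ is stationary by diamond applied to $A=C$; so I can pick $\delta$ in their intersection. For such $\delta$ we have $A_\delta=C\cap\delta$, which is cofinal in $\delta$ (as $\delta\in L$) and closed in $\delta$ (a limit point of $C$ below $\delta$ lies in $C$ and below $\delta$, hence in $C\cap\delta$); thus $D_\delta=C\cap\delta\subseteq C$. Therefore $\eta_\delta\subseteq C$, and in particular $\eta_\delta\setminus\delta'\subseteq C$ already with $\delta'=0$, as required—and this holds for stationarily many $\delta$.

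The only points needing care are mild bookkeeping ones, and I expect no genuine obstacle. First, in the extraction of $\eta_\delta$ one must check that the recursion never escapes $\delta$ and that $\rng(e)$ is genuinely closed of order type $\omega_1$; both rest on $\cf(\delta)=\omega_1$. Second, one must remember to intersect the diamond's stationary set with the club $L$, so that the guessed $A_\delta$ is actually cofinal in $\delta$ and its closure stays inside the closed set $C$. The reason everything goes through so smoothly is that $\diamondsuit(S^2_1)$ is vastly stronger than what is wanted: it delivers full club guessing by closed copies of $\omega_1$ on a stationary set, of which $\TCG(S^2_1)$ is an immediate weakening.
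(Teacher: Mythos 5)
The paper states this theorem without proof, so your argument stands on its own; unfortunately it contains one genuine gap, in the extraction of $\eta_\delta$ from $D_\delta$. Your recursion $e(0)=\min D_\delta$, $e(\alpha+1)=\min\bigl(D_\delta\setminus(e(\alpha)+1)\bigr)$, $e(\lambda)=\sup_{\beta<\lambda}e(\beta)$ is exactly the increasing enumeration of an initial segment of $D_\delta$, so $\rng(e)$ consists of the first $\omega_1$ elements of $D_\delta$. That set is cofinal in $\delta$ only when $\otp(D_\delta)=\omega_1$, and nothing guarantees this: $D_\delta$ is merely a club in $\delta$, and its order type can be any ordinal of cofinality $\omega_1$ up to $\delta$ itself. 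Concretely, run your own verification with $C=\omega_2$: the diamond hands you stationarily many $\delta\in S^2_1$ with $A_\delta=C\cap\delta=\delta$, so $D_\delta=\delta$, $e(\alpha)=\alpha$, and $\eta_\delta=\omega_1$, which is bounded in every $\delta>\omega_1$. So the $\delta$ you pick in the verification typically has $\eta_\delta$ \emph{not} cofinal in $\delta$, and indeed the sequence you build is not even a legal $\TCG(S^2_1)$ candidate, since the definition requires every $\eta_\delta$ to be a closed copy of $\omega_1$ cofinal in $\delta$. Your sentence ``$e$ is well defined, continuous, strictly increasing and cofinal'' silently conflates ``$D_\delta$ is cofinal in $\delta$'' with ``the first $\omega_1$ elements of $D_\delta$ are cofinal in $\delta$''.

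The repair is routine, and your overall strategy---diamond yields full club guessing by closed copies of $\omega_1$ on a stationary set, of which $\TCG(S^2_1)$ is a trivial weakening---is sound. Fix in advance, for each $\delta\in S^2_1$, a club $E_\delta\subseteq\delta$ of order type $\omega_1$ (the range of a continuous increasing cofinal map $\omega_1\to\delta$, which exists because $\cf(\delta)=\omega_1$). When $A_\delta$ is cofinal in $\delta$, set $\eta_\delta=E_\delta\cap D_\delta$: this is closed in $\delta$ as an intersection of closed sets, cofinal in $\delta$ as an intersection of two clubs in an ordinal of uncountable cofinality, and of order type exactly $\omega_1$ because it is a cofinal (hence uncountable, hence order-isomorphic) subset of $E_\delta$. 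With this change your verification goes through verbatim: for $\delta$ in the intersection of the diamond's stationary set for $C$ with the club of limit points of $C$, one has $D_\delta=C\cap\delta$, hence $\eta_\delta\subseteq C$ and $\eta_\delta\setminus\delta'\subseteq C$ already with $\delta'=0$, on a stationary set of $\delta$.
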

\begin{theorem}
$\PP$ preserves $\TCG(S^2_1)$ if one of the two conditions hold:
\begin{enumerate}
\item $\PP$ preserves $\omega_1$ and has $\aleph_2$-c.c., or
\item $\PP$ is $\aleph_2$-closed.
\end{enumerate}
\end{theorem}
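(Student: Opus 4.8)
The plan is to show that the \emph{same} ground-model sequence $\tup{\eta_\delta\mid\delta\in S^2_1}$ continues to witness $\TCG(S^2_1)$ in any generic extension $V[G]$ by $\PP$. Two things must be checked: first, that the sequence stays \emph{valid}---that $\omega_1,\omega_2$ are preserved, that $(S^2_1)^V\subseteq(S^2_1)^{V[G]}$, and that each $\eta_\delta$ is still a closed copy of $\omega_1$ cofinal in $\delta$; and second, the substantive point, that every \emph{new} club $C\subseteq\omega_2$ of $V[G]$ contains a tail of some $\eta_\delta$. Validity is cheap under either hypothesis, since both preserve $\omega_1$ (the c.c.\ case by assumption, the closed case because $\aleph_2$-closed is in particular $\sigma$-closed): closedness and cofinality of $\eta_\delta$ are absolute, and preservation of $\omega_1$ forces $\cf^{V[G]}(\delta)=\omega_1$ for every $\delta$ carrying a closed copy of $\omega_1$ (a cofinal $\omega$-sequence through $\delta$ would induce one through the order type $\omega_1$ of $\eta_\delta$, collapsing $\omega_1$). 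So $\delta\in(S^2_1)^V$ remains in $(S^2_1)^{V[G]}$, and the whole problem reduces to catching the new clubs; here the two hypotheses enter quite differently.

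For (1) I would use the classical fact that an $\aleph_2$-c.c.\ forcing adds no club to $\omega_2$ without a ground-model club below it. Given a name $\dot C$ forced to be club, for each $\alpha<\omega_2$ a maximal antichain deciding ``the least element of $\dot C$ above $\alpha$'' has size $\leq\aleph_1$, so the supremum $h(\alpha)$ of the decided values is $<\omega_2$; the closure points of $h$ form a ground-model club $D$, and since $\dot C$ is forced closed one checks $D\cap\Lim\subseteq C$. Applying $\TCG(S^2_1)$ \emph{in $V$} to the $V$-club $D\cap\Lim$ yields $\delta\in S^2_1$ and $\delta'<\delta$ with $\eta_\delta\setminus\delta'\subseteq D\cap\Lim\subseteq C$, as required. ($\aleph_2$-c.c.\ also preserves $\omega_2$, completing validity.)

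For (2) the leverage is that an $\aleph_2$-closed forcing adds no new $\omega_1$-sequence, hence no new subset of any $\delta<\omega_2$: in particular $C\cap\delta\in V$ for every $\delta<\omega_2$, the set $S^2_1$ and each $\eta_\delta$ are literally unchanged, and ``a tail of $\eta_\delta$ lies in $C$'' \emph{is} the $V$-statement ``a tail of $\eta_\delta$ lies in $C\cap\delta$''. The catch is that, unlike the c.c.\ case, $C$ need \emph{not} contain any ground-model club---closed-unbounded ``club shooting'' is $\aleph_2$-closed yet adds a fresh club---so one cannot simply feed a $V$-subclub of $C$ into the $V$-guessing. Instead I would argue by reflection: fix large $\theta$ and, in $V$, take a continuous increasing chain $\tup{M_i\mid i<\omega_2}$ of elementary submodels of $H(\theta)$ of size $\aleph_1$, with $\omega_1\subseteq M_0$, all relevant parameters (including $\dot C$ and the $\eta$-sequence) in $M_0$, and $\delta_i:=M_i\cap\omega_2\in\omega_2$; then $D:=\set{\delta_i\mid i<\omega_2}$ is a $V$-club and $\cf(i)=\omega_1$ gives $\delta_i\in S^2_1$. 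Using $\aleph_2$-closedness one builds, below any given condition and for a relevant $\delta=\delta_i$, an $(M_i,\PP)$-generic lower bound $q$ (a descending $\omega_1$-sequence meeting the $\aleph_1$ maximal antichains of $M_i$, with a bound supplied by closure); such $q$ forces $\dot C\cap\delta$ to be a club in $\delta$ lying in $V$.

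The main obstacle is exactly the final matching step. Having produced, via $V$-$\TCG$ applied to a suitable ground-model club of reflection points, a candidate $\delta\in S^2_1$, one must ensure that the \emph{fixed} $\eta_\delta$---rather than some ad hoc club threaded through $\delta$ using closure---already has a tail inside the generically decided trace $C\cap\delta$. This is the delicate point where the coherence of $C$ across all $\delta$ (it is a single $\dot C^{G}$, all of whose initial segments lie in $V$) must be combined with genericity and the \emph{global} guessing property. I expect to handle it by arranging $\eta_{\delta_i}\in M_i$ and showing, under the assumption that \emph{no} $\eta_\delta$ is caught, that the $V$-club of reflection points can be thinned to a $V$-club witnessing failure of $\TCG(S^2_1)$ in $V$---a contradiction. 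Everything else, namely validity and the entire c.c.\ case, is routine; this reconciliation of the fixed sequence with fresh clubs under closed forcing is the crux.
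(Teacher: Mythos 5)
You should know at the outset that the paper contains no proof of this theorem: it appears in the notes of Miyamoto's session as a stated (known) result, with no argument given. So your proposal can only be judged on its own merits. On those merits, your case (1) is correct and is the standard argument: an $\aleph_2$-c.c.\ forcing preserving $\omega_1$ preserves $\omega_2$ and the set $S^2_1$, and for a name $\dot C$ for a club one takes $h(\alpha)$ to be the supremum of the $\leq\aleph_1$ many decided values of $\min(\dot C\setminus\alpha)$, so that the limit closure points of $h$ form a ground-model club $D$ with $D\subseteq\dot C^G$, and $\TCG(S^2_1)$ applied in $V$ to $D$ finishes (modulo the routine rephrasing below an arbitrary condition). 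Your validity remarks are also fine.

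Case (2), however, is a plan rather than a proof, and it stops exactly at the crux, which you candidly flag; worse, the one concrete device you propose for closing it is provably unworkable. You say you expect to ``arrange $\eta_{\delta_i}\in M_i$'': this is impossible, since $\eta_{\delta_i}$ is cofinal in $\delta_i$, so $\eta_{\delta_i}\in M_i$ would give $\delta_i=\sup(\eta_{\delta_i})\in M_i$, contradicting $M_i\cap\omega_2=\delta_i$. The same problem infects the intended contradiction scheme. Failure of guessing would be witnessed only by the single generic club $C=\dot C^G$; the traces $C\cap\delta_i$ you can decide in $V$ are attached to pairwise incompatible master conditions (different $\delta_i$ require different descending sequences), so they do not amalgamate into any single $V$-club, while the $V$-club $\set{\delta_i\mid i<\omega_2}$ of reflection points carries no information about which points lie in $\dot C$. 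And no accumulation of \emph{local}, per-$\delta$ non-guessing data can ever contradict $\TCG(S^2_1)$ in $V$: for every fixed $\delta\in S^2_1$ there is trivially a club $e\subseteq\delta$ containing no tail of $\eta_\delta$ (e.g.\ $e=\lim(\eta_\delta)$, which misses the unboundedly many successor points of $\eta_\delta$), so $\TCG$ is irreducibly a global statement about one club of $\omega_2$, and your sketch supplies no mechanism for exploiting the global coherence of the name $\dot C$ across the $\delta_i$. Two lesser points: your master-condition construction needs the $M_i$ at cofinality-$\omega_1$ stages to be internally approachable (otherwise countable initial segments of the descending sequence need not lie in $M_i$ and the construction cannot continue inside the model) --- this is fixable but should be said; and note that preservation of $\TCG(S^2_1)$ only requires \emph{some} witnessing sequence in $V[G]$, not the ground-model $\vec\eta$ itself, a degree of freedom your approach forgoes and which a correct proof of (2) may well need.
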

\subsection{Questions}\hfill

\textbf{Question 1:} Suppose that $2^{2^{\aleph_0}}=\aleph_2$. Does that imply $\TCG(S^2_1)$?

\textbf{Question 2:} We already know that a forcing axiom for the single poset $\PP_{\text{ff}}^c$ (see definition below), then $\CH+2^{\omega_1}=\omega_3$, but $\TCG(S^2_1)$ fails. Can we have a forcing axiom for Shelah's class of forcings adjoined by $\PP_{\text{ff}}^c$?
\subsection{\texorpdfstring{$\PP^c_{\text{ff}}$}{PPff}}
We want to add an $\omega_1$-club to $S^2_1$. A condition $p\in\PP^c_{\text{ff}}$ is a function $p\colon S^2_1\to\omega_2$ satisfying the following properties:
\begin{enumerate}
\item $\dom p$ and it is a countable subset of $S^2_1$.
\item For every $\delta\in\dom p$, $\delta\leq p(\delta)$.
\item For every $\delta',\delta\in\dom p$ then if $\delta'<\delta$, we have $p(\delta')<\delta$.
\end{enumerate}
We say that $q\leq p$ if $p\subseteq q$. The generic club, $E$, is $\bigcup\{\dom p\mid p\in G\}$, where $G$ is a generic filter.

This forcing is strongly $\sigma$-closed, i.e.\ it admits greatest lower bounds for descending sequences. Moreover, this forcing preserves $\omega_2$ and therefore all cardinals.

\textbf{Point 1:} If $p\forces\dot E\cap\delta$ is cofinal in $\delta\in S^2_1$, then $\delta\in\dom p$ and $p\forces\delta\in\dot E$.

\textbf{Point 2:} $\forces$ If $\delta\in S^2_1$ and $\dot E\cap\delta$ is cofinal in $\delta$, then we have to avoid an unbounded subset of every ground model club in $\delta$.
\subsection{Forcing axioms for meeting \texorpdfstring{$\aleph_2$}{aleph 2} dense sets}
We want to have a forcing axiom allowing us to meet $\aleph_2$ dense open sets for the class including $\PP^c_{\text{ff}}$, the Baumgartner forcing, standard forcing for adding morasses, etc.

Such a forcing axiom implies the failure of $\TCG(S^2_1)$, and therefore the failure of $V=L$. Nevertheless, the inclusion of the morass forcing in the class means that we get an $(\omega_1,1)$-morass. Therefore $\square_{\omega_1}$ holds, and $\CH$ holds, there are Kurepa trees and $\omega_2$-Suslin trees.

\begin{center}
\begin{tabular}{|l|l|l|l|}
\hline
Property & $\PP_{\text{ff}}^c$ & $\BB$ & $\PP^0_{\text{morass}}$ \\\hline
strongly $\sigma$-closed & Yes & Yes & Yes\\\hline
well-met & Yes & Yes & No\\\hline
$\aleph_2$-c.c. & No & Yes & Yes\\\hline
\end{tabular}
\end{center}
\textbf{Question 3:} Assuming the forcing axiom above, we can prove that Shelah's forcing which is $\sigma$-lattice\footnote{Every countable set of pairwise compatible conditions have a greatest lower bound.} and $\aleph_2$-stationary Knaster is in fact equivalent to Baumgartner's forcing. Is it consistent that this is not the case with some weak forcing axiom?

\begin{definition}
We say that a strongly $\sigma$-closed forcing $\PP$ is \textit{Norwich} if for every $p$ we can assign a structure $A^p=\tup{U^p,<,F^p,f^p,R^p}$ such that:
\begin{enumerate}
\item $U^p$ is a countable, non-empty, subset of $\omega_2$,
\item $F^p$ is a countable subset of $\mathcal P(U^p)$,
\item $f^p$ is a partial function from $U^p$ to itself, such that $i<j$ and $i,j\in\dom f^p$, then $f^p(i)<j$,
\item $R^p\subseteq U^p\times U^p$.
\end{enumerate}
Such that $p\mapsto A^p$ is:
\begin{enumerate}
\item injective,
\item monotonous: if $q\leq p$, then $A^p$ is a pointwise subset of $A^q$, i.e.\ $U^p\subseteq U^q$, etc.,
\item continuous: i.e.\ if $p_n\to p$, then $U^p=\bigcup_{n<\omega}U^{p_n}$, etc.,
\item amalgamation property: if $A^p\sim A^q$, then $p\mathrel{\|}q$.
\item pre-generic: if $\Delta\in[\omega_2]^{\leq\omega}$ such that $\sup U^p<\min\Delta$, then there is $q\leq p$ such that $\Delta\subseteq\dom f^q$.
\end{enumerate} 
\end{definition}
We know that $\BB$ is Norwich with an extra pre-generic condition, which itself is Shelah. Can we separate those notions? Can we combine a forcing axiom for both Norwich and Shelah forcings, or does this lead to a contradiction?

\textbf{Question 4:} \textit{(Added Nov.~2019)} Is it possible to use two-types of models (countable, $\aleph_1$ with a reasonable extra condition) with finite symmetric systems \`a~la Asper\'o--Mota for a larger continuum, say, $\aleph_3$, with the forcing axiom for the single partial order $\PP^c_{\text{ff}}$?
\section{Problem 1.5: David Asper\'o}
The following is due to Chris Lambie-Hanson. Let $\kappa$ be a regular uncountable cardinal, say that a sequence $\tup{f_\alpha\mid\alpha<\kappa}$ is a \textit{coherent $\kappa$-scale} if 
\begin{enumerate}
\item $f_\alpha\colon\omega\to\omega$,
\item $f_\alpha<^* f_\beta$ for all $\alpha<\beta<\kappa$, and
\item there is a sequence $\tup{C_\alpha\mid\alpha\in S^\kappa_{\geq\omega_1}}$ such that $C_\alpha\subseteq\alpha$ is a club,
and there is $\tup{k_\alpha\in\omega\mid\alpha\in S^\kappa_{\geq\omega_1}}$ such that for all $\delta\in S^\kappa_{\geq\omega_1}$ and for all $\beta\in C_\delta$, if $n>k_\delta$, then $f_\beta(n)<f_\delta(n)$.
\end{enumerate}
\begin{theorem}[Lambie-Hanson]
It is consistent (modulo large cardinals, which are probably unnecessary) to have a coherent $\omega_2$-scale. However, it is provable that there is no coherent $\omega_4$-scale.
\end{theorem}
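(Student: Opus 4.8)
\subsection*{Proof proposal}

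The statement has two halves, and I would treat them separately: the consistency of a coherent $\omega_2$-scale, and the $\ZFC$ non-existence of a coherent $\omega_4$-scale. The first I expect to be routine modulo standard scale technology; the second is the heart of the matter.

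For the consistency half, the plan is to build the scale by transfinite recursion of length $\omega_2$ in a model of $2^{\aleph_0}=\aleph_2$ (forced at the outset, e.g.\ by adding $\aleph_2$ dominating reals so that also $\mathfrak b=\mathfrak d=\aleph_2$), maintaining a $<^*$-increasing sequence $\tup{f_\alpha\mid\alpha<\gamma}$. Successor steps and limits of cofinality $\omega$ are trivial (take $f_\gamma=f_{\gamma-1}+1$, and diagonalise over a countable cofinal set to get an upper bound). The only real work is at a limit $\delta$ with $\cf(\delta)=\omega_1$, where I must produce $f_\delta$ together with a club $C_\delta\subseteq\delta$ and an integer $k_\delta$ so that $f_\beta(n)<f_\delta(n)$ for all $\beta\in C_\delta$ and $n>k_\delta$; equivalently, $f_\delta$ must be an exact upper bound that is \emph{attained coordinatewise along a club}. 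To guarantee this I would carry along a $\square_{\omega_1}$-sequence (available in $L$, or arranged by the preparation forcing) to furnish coherent candidate clubs $C_\delta$, and choose the $f_\alpha$ so that each $\omega_1$-block is \emph{thin} in the sense that $\set{f_\beta(n)\mid\beta\in C_\delta}$ stays bounded in $\omega$ for every $n$; then $f_\delta(n):=\sup\set{f_\beta(n)\mid\beta\in C_\delta}$ is finite and works. The large cardinals alluded to in the statement presumably enter only to secure such a reflecting scaffold, and I agree they ought to be eliminable.

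For the impossibility half, the first step is to read the coherence data dynamically: fixing $\delta\in S^{\omega_4}_{\geq\omega_1}$, the function $f_\delta$ \emph{tail-dominates} every $f_\beta$ with $\beta\in C_\delta$ past $k_\delta$, so $f_\delta$ is a strong form of exact upper bound and $\delta$ is a ``good point'' in the $\mathrm{PCF}$ sense. Since $k_\delta\in\omega$, a pigeonhole over the stationarily many $\delta$ of each fixed uncountable cofinality lets me assume $k_\delta=k$ is constant on a stationary $S$, and even fix finitely many of the values $f_\delta(k{+}1),\dots,f_\delta(k{+}j)$. I would then reflect through the three uncountable cofinalities below $\omega_4$: starting from $\delta_3\in S$ with $\cf(\delta_3)=\omega_3$, choose a limit point $\delta_2\in C_{\delta_3}$ of cofinality $\omega_2$, then $\delta_1$ of cofinality $\omega_1$ inside the club $(C_{\delta_3}\cap\delta_2)\cap C_{\delta_2}$, and finally $\delta_0$ of cofinality $\omega$ inside $(C_{\delta_3}\cap C_{\delta_2}\cap\delta_1)\cap C_{\delta_1}$; intersecting the clubs is legitimate even though the $C_\delta$ are not mutually coherent, and it forces $f_{\delta_0}(n)<f_{\delta_1}(n)<f_{\delta_2}(n)<f_{\delta_3}(n)$ for all $n>k$.

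The main obstacle is exactly that this naive reflection, run once, only yields a length-four chain at each coordinate, which is no contradiction: to close the argument I must leverage that the descent \emph{exhausts} the cardinals $\omega_1,\omega_2,\omega_3$ (this is why the threshold is $\omega_4=\omega_3^+$ and not $\omega_3$). The plan is to convert the nested good points into a genuine rank computation \`a la Shelah's trichotomy: the exact upper bound at an $\omega_{i+1}$-point is governed, coordinatewise and modulo a club, by the exact upper bounds at the $\omega_i$-points below it, so that a suitably defined ordinal rank on $\set{f_\delta\mid\delta\in S}$ must strictly drop at each of the three reflection steps while collapsing to a natural number at the bottom $\omega$-level---forcing two of the $f_\delta$ to agree past $k$, hence $f_\delta=^*f_{\delta'}$, contradicting strict $<^*$-increase. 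Making this rank well defined and genuinely decreasing across the three levels, rather than merely producing finite chains, is where the real difficulty lies, and it is also what I expect leaves the case $\omega_3$ open.
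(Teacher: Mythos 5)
Your impossibility half has a genuine gap, and you name it yourself: after the single reflection pass through cofinalities $\omega_3,\omega_2,\omega_1,\omega$ you have only the four-term chain $f_{\delta_0}(n)<f_{\delta_1}(n)<f_{\delta_2}(n)<f_{\delta_3}(n)$ for $n>k$, which is consistent with any $<^*$-increasing sequence, and the rank that is supposed to drop strictly across the three levels is never defined. Since that rank is the entire content of the argument, the proof is missing exactly where it matters. The paper closes the argument by a quite different mechanism, which your proposal does not touch: from the scale one defines a single monotone operator $F\colon\power(\theta)\to\theta+1$, where $F(X)$ is the least $\delta$ such that the coordinatewise supremum of $\set{f_\alpha\mid\alpha\in X}$ is finite on a cofinite set of coordinates and is dominated mod finite by $f_\delta$, and $F(X)=\theta$ otherwise. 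This $F$ satisfies $F(X)\geq\sup X$, satisfies $F(C_\delta)=\delta$ for every $\delta$ of uncountable cofinality (this is precisely what coherence gives), and---the decisive point---has \emph{countable character}: every $X$ admits a countable $X_0\subseteq X$ with $F(X_0)=F(X)$, because $F(X)$ depends only on countably many coordinatewise suprema. A PCF-style lemma then shows no such $F$ exists on $\power(\omega_4)$: one builds a continuous increasing map $\pi$ from $\omega_3$ into $\omega_4$, using a club guessing sequence $\tup{C_\xi\mid\xi\in S^3_1}$ on $\omega_3$, arranging at each step that the values $F(\pi``(C_\eta\cap\xi))$ below $\omega_4$ are trapped under the next point of the range; then a club $C\subseteq\omega_3$ with $F(\pi``C)=\delta$, a countable subset realising the same $F$-value, and a guessed $C_\xi\subseteq C$ produce the contradiction. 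No trichotomy-style rank analysis is needed, and none is supplied in your sketch.

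Two further points. First, your heuristic for the threshold---that the descent ``exhausts'' $\omega_1,\omega_2,\omega_3$---is not the operative reason. What makes $\omega_4$ work is that club guessing on $S^3_1$ is a $\ZFC$ theorem of Shelah (as $\omega_1^+=\omega_2<\omega_3$), whereas the analogous argument one cardinal down would need club guessing on $S^2_1$, which can consistently fail; this matches David's remark in the paper that a coherent $\omega_3$-scale forces the failure of club guessing on $S^2_1$, and it is why the $\omega_3$ case is posed as open. Second, even the bookkeeping in your reflection pass is off: stabilising $k_\delta$ on a stationary set of each fixed cofinality does not control the reflected points $\delta_2$ and $\delta_1$, which are drawn from the single club $C_{\delta_3}$ and need not meet those stationary sets (and reflection of a stationary subset of $S^{\omega_4}_{\omega_2}$ to cofinality-$\omega_3$ points is itself not a $\ZFC$ fact). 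As for the consistency half, the paper states it without proof, so your recursion-plus-$\square_{\omega_1}$ sketch cannot be checked against anything in the text; as a plan it is plausible, but the provable impossibility at $\omega_4$ is where the theorem lives, and there your proposal stops short of an argument.
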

\textbf{Question:} Is this consistent to have a coherent $\omega_3$-scale?

This would be a natural question to solve with a $3$-types side condition approach.

One related result is due to Shelah,\footnote{\href{https://doi.org/10.1002/malq.200910010}{Shelah, On long increasing chains modulo flat ideals. (Math.\ Logic.\ Quart.\ 56(4), 397--399.)}} below is a related result due to Tanmay Inamdar.
\begin{theorem}[Inamdar]
There is no sequence $\tup{X_\alpha\mid\alpha<\omega_4}$ of sets in $[\omega_3]^{\omega_3}$ which is $\subseteq^*$ (modulo countable) such that: $\alpha<\beta$ implies $X_\alpha\setminus X_\beta$ is countable, and $X_\beta\setminus X_\alpha\in[\omega_3]^{\omega_3}$.
\end{theorem}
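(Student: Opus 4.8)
The plan is to pass to complements and reformulate the statement as a bound on the length of a coherent almost disjoint family. Set $Y_\alpha=\omega_3\setminus X_\alpha$. Since $\alpha<\beta$ gives $X_\alpha\setminus X_\beta$ countable and $X_\beta\setminus X_\alpha\in[\omega_3]^{\omega_3}$, one checks immediately that each $Y_\alpha$ has size $\omega_3$, that $Y_\beta\subseteq^* Y_\alpha$ (modulo countable), and that $Y_\alpha\setminus Y_\beta\in[\omega_3]^{\omega_3}$; so $\tup{Y_\alpha\mid\alpha<\omega_4}$ is a $\subseteq^*$-decreasing tower with full-size jumps. Writing $R_\alpha=Y_\alpha\setminus Y_{\alpha+1}$ for the successive differences, a short computation using $Y_{\alpha'}\subseteq^* Y_{\alpha+1}$ shows $R_\alpha\cap R_{\alpha'}$ is countable for $\alpha<\alpha'$, so $\set{R_\alpha\mid\alpha<\omega_4}$ is an almost disjoint (mod countable) family of size-$\omega_3$ subsets of $\omega_3$; conversely the largeness of the \emph{far} differences $Y_\alpha\setminus Y_\beta$ follows from $R_\alpha\setminus Y_\beta=^* R_\alpha$. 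Thus it suffices to rule out a \emph{refining} almost disjoint family of this length, the refining coherence being exactly the extra structure that a generic almost disjoint family lacks (and which it must lack, since almost disjoint families of size-$\omega_3$ sets can have size up to $2^{\omega_3}$).

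The engine I would try is a reflection/rank argument. First I would reduce to the case in which every point of $\omega_3$ eventually enters the $X$-tower: replacing $\omega_3$ by $\bigcup_\alpha X_\alpha$ and letting $e(\xi)=\min\set{\alpha\mid\xi\in X_\alpha}$, the range of $e$ has size $\le\omega_3$ and hence, by regularity of $\omega_4$, is bounded by some $\alpha^*<\omega_4$. On the final segment $[\alpha^*,\omega_4)$ no new points ever enter, so the tower lives inside a fixed set $U$ of size $\omega_3$ and all activity is pure oscillation of points of $U$ in and out of the $X_\alpha$. The goal is then to manufacture, from the $\omega_4$-length tower on $U$, a genuine function $\omega_3\to\omega_4$ whose range is unbounded in $\omega_4$, contradicting $\lvert\omega_3\rvert<\mathrm{cf}(\omega_4)$. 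To this end I would reflect the tower into a continuous $\in$-chain $\tup{M_i\mid i<\omega_1}$ of elementary submodels of some $H(\Theta)$ with $\omega_3\subseteq M_i$ and $\vec X\in M_0$, and examine the reflection point $\delta=\sup_i(\sup(M_i\cap\omega_4))$, exploiting $\sigma$-completeness of the countable ideal at cofinality-$\omega$ sub-points so that the countably many ``errors'' contributed by each of the $\omega_1$ chain levels union to a set of size $\le\omega_1<\omega_3$, isolating a genuine large ``first appearance'' set at $\delta$.

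The hard part is precisely the accumulation of these countable errors. Because the monotonicity is only \emph{mod countable}, an individual $\xi$ may oscillate: for each later index it can be among the countably many exceptions, so it need not stabilise even at points of cofinality $\omega$, and the naive functionals (first entry, last exit) have bounded or empty fibres. Summing the per-stage countable exceptions over the up-to-$\omega_3$ stages below a reflection point yields a set of full size $\omega_3$, which is exactly what defeats single-model fibre counting, SDR extraction, and cofinality-$\omega$ pinning alike. My proposal for bridging this gap is to replace the crude functionals by the rank machinery of Shelah's \emph{long increasing chains modulo flat ideals} (the cited paper): one assigns to the points of $U$ ordinal ranks measuring their stabilised behaviour in the tower relative to the flat ideal $[\omega_3]^{\le\omega}$, and shows that the full-size jumps force the rank to be cofinal in $\omega_4$ while the domain $U$ has size $\omega_3$, giving the ZFC bound $\omega_3^+=\omega_4$. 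The delicate point, where I expect the real work to lie, is establishing well-definedness and monotonicity of this rank on a large (club-many) set of points despite the oscillation, which is what the $\sigma$-completeness of the ideal, combined with the $\omega_1$-chain reflection, is meant to secure. The sharpness of the index $\omega_4$ here — as opposed to $\omega_2$, where coherent objects are consistent — is a good sanity check that the argument must use $\mathrm{cf}(\omega_4)=\omega_4$ together with the two-cardinal gap between the ideal's completeness $\omega_1$ and the ground set $\omega_3$, and it is also why the analogous $\omega_3$ question resists this method and remains open.
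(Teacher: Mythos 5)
Your reformulation (passing to complements, extracting the successive differences $R_\alpha$) is correct but carries no force on its own---as you yourself concede, almost disjoint families of size-$\omega_3$ subsets of $\omega_3$ of length $\omega_4$ exist outright, so everything hinges on the ``refining'' structure---and your reduction via $e(\xi)=\min\set{\alpha\mid\xi\in X_\alpha}$ buys nothing, since the tower trivially lives inside $\omega_3$ anyway and all of the difficulty (the oscillation of points in and out of the $X_\alpha$, mod countable) persists verbatim on the tail $[\alpha^*,\omega_4)$. The genuine content of your proposal is therefore its final paragraph, and there you stop exactly where the proof begins: you correctly identify the obstruction (per-stage countable errors accumulating, over up to $\omega_3$ stages, to a set of full size $\omega_3$) and then outsource its resolution to ``the rank machinery of Shelah's long increasing chains modulo flat ideals'' without defining the rank, proving its well-definedness or monotonicity on a large set, or showing that the full-size jumps $X_\beta\setminus X_\alpha\in[\omega_3]^{\omega_3}$ force it to be cofinal in $\omega_4$---indeed you flag this step as ``where I expect the real work to lie''. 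That is not a proof with a fixable hole; it is a research plan whose key lemma is missing.

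To see concretely that the gap is real, compare with the engine these notes supply for the cognate result (nonexistence of a coherent $\omega_4$-scale): a PCF-style lemma about an operator $F\colon\power(\theta)\to\theta+1$ that is monotone, satisfies $F(X)\geq\sup X$, attains every $\delta$ of uncountable cofinality on a club $C_\delta\subseteq\delta$, and---crucially---is countably based, i.e.\ every $X$ has a countable $X_0\subseteq X$ with $F(X)=F(X_0)$; club guessing on $S^3_1$ then rules out $\theta=\omega_4$. For a coherent scale the countable-basis clause is free, because the pointwise supremum of any family of functions from $\omega$ to $\omega$ is already attained on a countable subfamily. For your tower there is no analogous countably based supremum, and the natural candidates fail precisely at this clause: $F(A)=\min\sett{\delta\midd X_\alpha\subseteq^* X_\delta\text{ for all }\alpha\in A}$ is simply $\sup A$, which no countable $A_0\subseteq A$ computes when $\cf(\sup A)=\omega_1$; while $F(A)=\min\sett{\delta\midd \bigl(\bigcup_{\alpha\in A}X_\alpha\bigr)\setminus X_\delta\text{ is countable}}$ may take the value $\theta$ on a bounded uncountable $A$ (your accumulated errors, which nothing in the hypotheses prevents), even though every countable $A_0$ gives $F(A_0)\leq\sup A_0<\theta$. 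So the theorem is not a routine transcription into this framework: constructing a functional or rank that survives the error accumulation---the step Shelah's flat-ideal analysis performs and your sketch merely names---is the entire proof, and it is absent from your proposal.
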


Boban proposes the following observation.
\begin{proposition}
Assume Chang Conjecture holds. Then there is no sequence $\tup{f_\alpha\mid\alpha<\omega_2}$ of functions $f_\alpha\colon\omega_1\to\omega_1$ which are increasing modulo finite.
\end{proposition}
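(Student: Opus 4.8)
The plan is to argue by contradiction using the standard form of Chang's Conjecture, $(\omega_2,\omega_1)\twoheadrightarrow(\omega_1,\omega)$, to produce an elementary submodel whose trace on $\omega_1$ is countable while its trace on $\omega_2$ is uncountable, and then exploit the tension between these two traces. Write $f<^*g$ for $f,g\colon\omega_1\to\omega_1$ to mean that $\{\xi<\omega_1\mid f(\xi)\geq g(\xi)\}$ is finite. Suppose $\tup{f_\alpha\mid\alpha<\omega_2}$ is $<^*$-increasing. First I would fix a large regular $\theta$ and apply Chang's Conjecture to obtain $M\prec(H(\theta),\in,\lhd,\tup{f_\alpha\mid\alpha<\omega_2})$ with $\tup{f_\alpha}\in M$, $|M\cap\omega_2|=\aleph_1$, and $|M\cap\omega_1|=\aleph_0$ (this is the routine transfer of Chang's Conjecture from a structure on $\omega_2$ to an elementary submodel of $H(\theta)$). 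By elementarity $M\cap\omega_1$ is a countable ordinal $\delta$, and $\delta\notin M$.

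The key observation, which I would isolate as the heart of the argument, is that all the relevant error sets are absorbed below $\delta$. Indeed, for $\alpha<\beta$ both in $M\cap\omega_2$ we have $f_\alpha,f_\beta\in M$, so the finite set $B_{\alpha\beta}:=\{\xi<\omega_1\mid f_\alpha(\xi)\geq f_\beta(\xi)\}$ is an element of $M$; being finite it is a subset of $M$, hence $B_{\alpha\beta}\subseteq M\cap\omega_1=\delta$. Consequently, for every $\xi$ with $\delta\leq\xi<\omega_1$ and all $\alpha<\beta$ in $M\cap\omega_2$ we get genuine (not merely mod-finite) pointwise monotonicity $f_\alpha(\xi)<f_\beta(\xi)$. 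In particular $\alpha\mapsto f_\alpha(\delta)$ is strictly increasing on $M\cap\omega_2$, so $M\cap\omega_2$ order-embeds into $\omega_1$; since it has size $\aleph_1$ this forces $\otp(M\cap\omega_2)=\omega_1$, whence $\cf(\sup(M\cap\omega_2))=\omega_1$. Moreover, for each fixed $\xi\in[\delta,\omega_1)$ the $\aleph_1$-many values $\{f_\alpha(\xi)\mid\alpha\in M\cap\omega_2\}$ form a strictly increasing sequence, hence are cofinal in $\omega_1$.

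To finish, I would choose any $\beta^\ast$ with $\sup(M\cap\omega_2)\leq\beta^\ast<\omega_2$ (possible since $\cf(\omega_2)=\omega_2$), so $\beta^\ast>\alpha$ for every $\alpha\in M\cap\omega_2$. For each $n<\omega$, the point $\delta+n$ lies in $[\delta,\omega_1)$, so by cofinality I can pick the least $\alpha_n\in M\cap\omega_2$ with $f_{\alpha_n}(\delta+n)>f_{\beta^\ast}(\delta+n)$. The countably many $\alpha_n$ are bounded, since $\cf(M\cap\omega_2)=\omega_1$, by some $\alpha'\in M\cap\omega_2$. Then pointwise monotonicity above $\delta$ gives, for every $n<\omega$, that $f_{\alpha'}(\delta+n)\geq f_{\alpha_n}(\delta+n)>f_{\beta^\ast}(\delta+n)$. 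Thus $f_{\alpha'}(\xi)\geq f_{\beta^\ast}(\xi)$ on the infinite set $\{\delta+n\mid n<\omega\}$, so $f_{\alpha'}\not<^* f_{\beta^\ast}$ even though $\alpha'<\beta^\ast$, contradicting the assumption.

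The part I expect to carry the weight is the absorption step $B_{\alpha\beta}\subseteq\delta$, since this is exactly where Chang's Conjecture is essential: it supplies a model with a nonempty "tail" $[\delta,\omega_1)$ on which the restricted sequence becomes truly pointwise increasing, while simultaneously keeping $M\cap\omega_2$ uncountable of cofinality $\omega_1$ so that the countably many thresholds can be bounded inside the model. A merely countable $M$ would absorb the error sets as well but leave $M\cap\omega_2$ countable with no cofinality leverage, and a model with $\omega_1\subseteq M$ would make $\delta=\omega_1$ and the tail empty; the simultaneous smallness on $\omega_1$ and largeness on $\omega_2$ granted by Chang's Conjecture is precisely what closes the argument. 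The only other point needing (routine) care is the standard passage from the combinatorial form of Chang's Conjecture to the desired elementary submodel of $H(\theta)$ with the stated traces.
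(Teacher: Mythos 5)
Your proof is correct, but it takes a genuinely different route from the paper's. The paper applies Chang's Conjecture in its partition-relation form: it defines truncations $f_\alpha^{\xi,\eta}=\min(f_\alpha\restriction[\xi,\xi+\omega),\eta)$, builds for each pair $(\xi,\eta)$ a club $C^{\xi,\eta}\subseteq\omega_2$ recording whether the sequence of truncations stabilises (choosing non-stabilisation witnesses when it does not), intersects these clubs, and colours pairs from the resulting club $C$ by $c(\alpha,\beta)=\max\set{\xi\mid f_\alpha(\xi)\geq f_\beta(\xi)}$; Chang's Conjecture then yields $S\subseteq C$ of size $\aleph_1$ with $c``[S]^2$ bounded by some $\xi$, and the contradiction comes from playing the resulting pointwise monotonicity of $\omega_1$-many functions on the block $[\xi,\xi+\omega)$, truncated at a suitable $\eta$, against the non-stabilisation witnesses built into $C^{\xi,\eta}$. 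You instead invoke the equivalent elementary-submodel form: a single $M\prec H(\theta)$ with $|M\cap\omega_2|=\aleph_1$ and $M\cap\omega_1=\delta<\omega_1$ absorbs every finite error set $B_{\alpha\beta}$ below $\delta$ (finite sets in $M$ are subsets of $M$), so the family becomes genuinely pointwise increasing on the tail $[\delta,\omega_1)$, and the contradiction is a direct cofinality argument: $\otp(M\cap\omega_2)=\omega_1$ lets you bound the countably many thresholds $\alpha_n$ chosen against any $\beta^*\geq\sup(M\cap\omega_2)$, producing $\alpha'<\beta^*$ with $f_{\alpha'}\geq f_{\beta^*}$ on the infinite set $\set{\delta+n\mid n<\omega}$. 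Each step you flag as routine (the transfer of CC to $H(\theta)$, transitivity of $M\cap\omega_1$, absorption of finite sets) is indeed standard and correct. What the comparison buys: your version is shorter and more robust, dispensing entirely with the truncation-and-stabilisation bookkeeping, which in the transcribed notes is the delicate (and somewhat garbled) part; the paper's colouring version stays within the purely combinatorial partition form of CC, with no detour through elementary submodels, and makes the structure of the disagreement points explicit via the function $c$.
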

\begin{proof}
Let $I$ be some set, $g\colon I\to\omega_1$ and $\eta<\omega_1$, $\min(g,\eta)$ is the truncation of $g$ at $\eta$. For every $\alpha<\omega_2$, and $\xi,\eta<\omega_1$, $f_\alpha^{\xi,\eta}$ is $\min(f_\alpha\restriction[\xi,\xi+\omega),\eta)$.

Fixing $\xi$ and $\eta$ for a moment, if $\alpha<\beta$, $f_\alpha^{\xi,\eta}\leq^*f_\beta^{\xi,\eta}$. We define clubs, $C^{\xi,\eta}$: If $f_\alpha^{\xi,\eta}$ for $\alpha\to\omega_2$ stabilises at some point, we let $C^{\xi,\eta}$ be the tail on which it is stable of the form $\omega_2\setminus\mu$. If the sequence does not stabilise, then we choose $C^{\xi,\eta}$ such that $\alpha<\beta\in C^{\xi,\eta}$ implies $f_\alpha^{\xi,\eta}\neq_{\text{fin}} f_\beta^{\xi,\beta}$, i.e.\ $\{n<\omega\mid f_\alpha(\xi+n)<f_\beta(\xi+n)\leq\eta\}$ is infinite.

Let $C=\bigcap_{\xi,\eta<\omega_1}C^{\xi,\eta}$ be a club in $\omega_2$. And let $c\colon[C]^2\to\omega_1$ be defined by $c(\alpha<\beta)=\max\{\xi\mid f_\alpha(\xi)\geq f_\beta(\xi)\})$. By CC, there is $S\subseteq C$, $S=\{\alpha_\rho\mid\rho<\omega_1\}$, and $c``[S]^2\subseteq\xi<\omega_1$. If $\rho<\tau<\omega_1$, then $f_{\alpha_\rho}<f_{\alpha_\tau}$ in the interval $[\xi,\xi+\omega)$. Let $\eta$ be large enough such that the truncation by $\eta$ on $[\xi,\xi+\omega)$ is irrelevant for $f_{\alpha_0}$ and $f_{\alpha_1}$.

This witnesses that $C^{\xi,\eta}$ was defined by picking witnesses for non-stabilisation. This means now that for every $\alpha<\beta\in C^{\xi,\eta}$, the set $\{n\mid f_\alpha^{\xi,\eta}(\xi+n)<f_\beta^{\xi,\eta}\leq\eta\}$ is infinite. This means for some $n<\omega$, $f_{\alpha_i}(\xi+n)<f_{\alpha_\beta}(\xi+n)<\eta$. But this is impossible.
\end{proof}

Boban presents a proof of the claim by Chris Lambie-Hanson that there is no coherent $\omega_4$-sequence.
\begin{theorem}
There is no coherent $\omega_4$-scale.
\end{theorem}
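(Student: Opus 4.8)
The plan is to run a reflection-and-rank argument of the sort Shelah uses for increasing chains modulo flat ideals, the coherence clause (3) being precisely the \emph{flatness} that makes the argument survive a countable domain. Throughout write $<^*$ for dominance modulo finite and let $J$ be the Fréchet (finite) ideal on $\omega$, so that clause (2) reads $f_\alpha<_J f_\beta$ for $\alpha<\beta$. The first move is to normalise the thresholds: for each $i\in\{1,2,3\}$ the set $\{\delta<\omega_4:\cf\delta=\omega_i\}$ is stationary and $\delta\mapsto k_\delta$ maps it into $\omega$, so by pigeonhole I may fix one $k\in\omega$ and a stationary $S_i$ on which $k_\delta=k$. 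After replacing every $f_\alpha$ by $f_\alpha\restriction[k,\omega)$, clause (3) becomes the clean assertion that for $\delta$ of uncountable cofinality and $\beta\in C_\delta$ one has $f_\beta(n)<f_\delta(n)$ for all $n\ge k$; that is, $f_\delta$ is a genuine \emph{pointwise} upper bound of $\{f_\beta:\beta\in C_\delta\}$.

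Next comes the single reflection step, which fixes the numerology and is the part I am certain of. Pick $\delta\in S_3$. Then $\langle f_\beta:\beta\in C_\delta\rangle$ is $<^*$-increasing of length $\cf\delta=\omega_3$ and is pointwise bounded by the fixed $M:=f_\delta$. Moreover the coherence restricts to $C_\delta$: for $\gamma\in C_\delta$ of uncountable cofinality the club $C_\gamma\cap C_\delta$ still exhibits $f_\gamma$ as a uniform pointwise bound. So a coherent $\omega_4$-scale yields a \emph{pointwise bounded, coherent} $<^*$-increasing sequence of length $\omega_3$; applying the same pigeonhole inside $C_\delta$ (whose cofinality-$\omega_2$ points are stationary in $\delta$) and then again, I peel off analogous bounded coherent sequences of lengths $\omega_2$ and $\omega_1$, each carrying one fewer layer of coherence than the last.

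For the engine, note that $<_J$ on functions $\omega\to\omega$ is well founded (each coordinate is well ordered and we quotient by an ideal), so each $f$ has an ordinal rank $\rank_J(f)$, and clause (2) already forces $\langle\rank_J(f_\alpha):\alpha<\omega_4\rangle$ to be strictly increasing. The naive height of $(\omega^\omega,<_J)$ is bounded only by $(2^{\aleph_0})^+$, so plain rank says nothing when the continuum is large; the statement I actually need is Shelah's flat-ideal bound, namely that a \emph{flat} (coherent) $<_J$-increasing sequence of functions into $\omega$ has length strictly below $\aleph_0^{+4}=\omega_4$. The reflections above are the proof of that bound: at a reflection point of cofinality $\omega_i$ the uniform threshold supplied by coherence lets me extract an exact upper bound of $\langle f_\beta:\beta\in C_\delta\rangle$ and thereby strictly lower a derived flat rank, and since the domain is countable only the three uncountable cofinalities $\omega_1,\omega_2,\omega_3$ are available, permitting at most three such reductions. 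The base of the recursion is a pointwise bounded $<^*$-increasing $\omega_1$-sequence, which must be eventually constant at each coordinate and hence cannot be strictly $<^*$-increasing.

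The hard part is exactly the italicised claim of the third paragraph: producing the exact upper bound, and the strict drop of the flat rank, at each of the three uncountable cofinalities \emph{uniformly in} $2^{\aleph_0}$. This is where clause (3)---a single threshold $k_\delta$ good on an entire club, rather than the per-pair thresholds of a bare $<^*$-increasing sequence---is indispensable, and it is the function-side analogue of the bounded-chain statement for $[\omega_3]^{\omega_3}$ recorded above. The fact that a coherent $\omega_2$-scale is consistent while the $\omega_3$ case is open is consistent with this picture: the argument spends exactly one cofinality level per coherence layer, so it settles length $\omega_4$ but is silent at $\omega_3$.
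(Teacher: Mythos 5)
Your ``engine'' rests on two claims that are false, and the second of them is fatal. First, $<_J$ (that is, $<^*$) is \emph{not} well founded on ${}^{\omega}\omega$: setting $f_n(m)=\max(m-n,0)$ gives a strictly $<^*$-decreasing sequence $f_0>^*f_1>^*\cdots$, so there is no ordinal $\rank_J$ to be dropped. Second, your base case fails outright in $\ZFC$: pointwise bounded, strictly $<^*$-increasing $\omega_1$-sequences exist. For instance, let $P=\{g\in{}^{\omega}\omega : g(n)\le n\text{ and }n-g(n)\to\infty\}$; given countably many $g_k\in P$, choose $n_0<n_1<\cdots$ with $g_k(n)\le n-(j+2)$ for all $k\le j$ and $n\ge n_j$, and set $h(n)=1+\max_{k\le j}g_k(n)$ on $[n_j,n_{j+1})$. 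Then $h\in P$ and $g_k<^*h$ for every $k$, so a recursion of length $\omega_1$ produces a strictly $<^*$-increasing chain pointwise below $M(n)=n$. Being $<^*$-increasing simply does not force eventual constancy at each coordinate. Worse, if your base case were a theorem, a single reflection at a point of cofinality $\omega_1$ would already refute coherent $\omega_2$-scales, contradicting the Lambie-Hanson consistency result that you yourself cite: your scheme spends one cofinality per layer \emph{uniformly}, and so it proves too much. Note also that below $\omega_1$ there are no points of uncountable cofinality, so clause (3) is silent at the bottom and no residual coherence can rescue the base; and the step you explicitly flag as ``the hard part'' (the exact upper bound and the strict drop of a ``flat rank'') is never carried out, so the proposal is an outline whose crux is both missing and, as the numerology shows, unfillable as stated.

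What actually separates $\omega_4$ from $\omega_2$ and $\omega_3$ in the paper's proof is not an $\omega_1$ base case but two ingredients your sketch lacks. From the scale one defines an operator $F\colon\power(\omega_4)\to\omega_4+1$, sending $X$ to the least $\delta$ with the pointwise maximum of $\{f_\alpha : \alpha\in X\}$ eventually below $f_\delta$ (and to $\omega_4$ when no such $\delta$ or no such maximum exists). This $F$ is monotone, dominates suprema, satisfies $F(C_\delta)=\delta$ at points of uncountable cofinality (this is exactly where coherence enters), and---because the functions have countable domain---has \emph{countable character}: for every $X$ there is a countable $X_0\subseteq X$ with $F(X)=F(X_0)$. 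One then builds a closed copy of $\omega_3$ inside $\omega_4$ against a $\ZFC$ club guessing sequence on $S^3_1$ (available since $\omega_1^+<\omega_3$), arranging each new point above the $F$-values of the images of all potentially guessed clubs; countable character then traps $F$ of a guessed club inside the image of a bounded initial segment, contradicting monotonicity. Club guessing at cofinality-$\omega_1$ points is a theorem at $\omega_3$ but not at $\omega_2$---this, and not a count of coherence layers, is why the bound is exactly $\omega_4$ and why the $\omega_3$ case remains open (as David notes, a coherent $\omega_3$-scale forces the failure of club guessing on $S^2_1$). Your instinct that the argument is ``the function-side analogue'' of the $[\omega_3]^{\omega_3}$ chain bound is reasonable, but to repair your route you would have to replace the nonexistent $\rank_J$ and the false base case with precisely this club-guessing-plus-countable-character mechanism.
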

\begin{proof}
We adapt the proof of the following PCF theorem lemma.
\begin{lemma}
Suppose that there is a function $F\colon\power(\theta)\to\theta+1$ such that:
\begin{enumerate}
\item $X\subseteq Y$ implies $F(X)\leq F(Y)$.
\item $F(X)\geq\sup X$.
\item For every $\delta\geq\cf(\delta)\geq\omega_1$, there is a club $C_\delta\subseteq\delta$ such that $F(C)=\delta$.
\item For all $X$, there is a countable $X_0\subseteq X$ with $F(X)=F(X_0)$.
\end{enumerate}
Then $\theta\neq\omega_4$.
\end{lemma}
In the PCF context, $F$ is really a closure operator of the PCF, and $F$ as here is the maximum. In the case of a coherent $\omega_4$-scale, we can do the following.

Suppose that $\tup{f_\alpha\mid\alpha<\theta}$ is a coherent $\theta$-scale. We define the following $F\colon\power(\theta)\to\theta+1$. If $X\subseteq\theta$, and there are infinitely many $n<\omega$ such that $\sup\{f_\alpha(n)\mid\alpha\in X\}=\omega$, define $F(X)=\theta$. Otherwise, there is some function $f\colon\omega\to\omega$ which is the pointwise maximum on a cofinite subset of $\omega$. If there is some $\delta$ such that $f\leq^*f_\delta$, then $F(X)=\delta$. And if there is no such $\delta$, then $F(X)=\theta$ again.

Now, due to the definition of a coherent $\theta$-scale, the three requirements of the lemma hold quite easily. Therefore $\theta<\omega_4$.
\end{proof}
\begin{proof}[Proof of PCF-style lemma]
Suppose otherwise, so $\theta=\omega_4$. We have a club guessing sequence on $\omega_3$, $C_\xi$ for $\xi\in S^3_1$. We will define $C$, a closed subset of $\omega_4$ using the function $F$ and the club guessing sequence.

Let $\pi$ be our partial isomorphism from an initial segment of $\omega_3$ into $C$. Suppose we defined $\pi$ and $C$, up to $\xi$. Then $\alpha_{\xi+1}=\sup(\{F(\pi``(C_\eta\cap\xi))\mid\eta\geq\xi\}\cap\omega_4)+1$.

Namely, we look at the remaining $S_\eta\cap\xi$, above $\xi$, copy them to $\omega_4$ using $\pi$, apply $F$ to each one, and consider all those which do not give $\omega_4$ under $F$. This is well-defined, since $\pi$ was defined up to $\xi$, so $C_\eta\cap\xi$ is defined. Now there is a club $C\subseteq\omega_3$ such that $F(\pi``C)=\delta=\sup\{\alpha_\xi\mid\xi<\omega_3\}$.

Now we have a contradiction, since there is a countable $C_0\subseteq C$ such that $F(C_0)=\delta$. That means there is $\xi\in S^3_1$ such that $C_\xi\subseteq C$. Now, $F(\pi``C_\xi)\leq\delta$, and for every $\rho\in C_\xi$, $F(\pi``(C_\xi\cap\rho+1))<\min\pi``(C_\xi\setminus\rho+1)$, this is because of the way we defined $\pi$. But this is impossible.
\end{proof}
David notes that for a coherent $\omega_3$-scale to exist, it must be the case that club guessing on $S^2_1$ fails. This is possible, of course, but puts a limitation.

David proposes the following natural question, which Boban notes of having considered in 1998.

\textbf{Question:} Suppose that $\aleph_\omega$ is a strong limit cardinal. Is there a forcing $\PP$ such that $|\PP|<\aleph_\omega$, $\PP$ preserves $\aleph_1,\aleph_2$ and $\aleph_3$, and $\PP$ forces club guessing on $S^2_1$?

The existence of such $\PP$ implies $2^{\aleph_\omega}<\aleph_{\omega_3}$ in that model. So a positive answer would improve the bounds on the PCF Conjecture.

David continues. Suppose that there is a sequence $\{C_\xi\mid \xi<\omega_2\}$ of clubs of $\omega_1$ such that every club $C\subseteq\omega_1$ contains one of them, i.e.\ the density of the club filter is $\aleph_2$, then you have a form of weak club guessing given below.

\begin{definition}
$\operatorname{CG}(S^2_1)^-$ means that there is a sequence $(\cC_\alpha\mid\alpha\in S^2_1)$ such that $|\cC_\alpha|\leq\aleph_2$, $\cC_\alpha$ is a set of clubs of $\alpha$, and for every club $C\subseteq\omega_2$, there is some $\alpha$ and $D\in\cC_\alpha$ such that $D\subseteq C$.
\end{definition}
In Abraham--Shelah's model\footnote{\href{https://doi.org/10.2307/2273954}{Abraham and Shelah, On the intersection of closed unbounded sets. (J.~Sym.~Log.\ 51(1) 180--189.)}} where there is a family $\{C_\alpha\mid\alpha<\omega_2\}$ of clubs of $\omega_1$, such that every outer model with the same $\omega_1$ every uncountable subfamily has a finite intersection. So in their model there is no ``nice'' forcing which forces that the density of the club filter on $\omega_1$ is $\aleph_2$. This gives a negative answer to a strong version of the question, but not the question itself.
\section{Problem 3: Uri Abraham}
\subsection{Motivation: Basic results in polychromatic Ramsey theory}
In usual Ramsey theory we ask for homogeneous sets. In polychromatic Ramsey theory we want the opposite.
\begin{definition}
If $\kappa$ is a cardinal and $f\colon[\kappa]^2\to\kappa$ is some function, we say that $X\subseteq\kappa$ is \textit{polychromatic} if $f\restriction[X]^2$ is injective.
\end{definition}
We will generally require some properties to hold for $f$, else we might not have large polychromatic sets. We take the convention that in a pair $\{\alpha,\beta\}$, it is always the case that $\alpha<\beta$.
\begin{enumerate}
\item $f$ is normal if when $\{\alpha_1,\beta_1\},\{\alpha_2,\beta_2\}\in[\kappa]^2$, if $\beta_1\neq\beta_2$, then $f(\alpha_1,\beta_1)\neq f(\alpha_2,\beta_2)$.
\item $f$ is $<\omega$-bounded if any $\nu<\kappa$, $f^{-1}(\nu)$ is a finite set.
\end{enumerate}
We will write $\kappa\pto(\lambda)^2_{\bd}$ to mean that if $f\colon[\kappa]^2\to\kappa$ is normal\footnote{Normality is not really needed, but it simplifies the presentation.} and $2$-bounded, then there is a polychromatic set of order type $\lambda$.

\begin{theorem}
$\omega_1\pto(\alpha)^2_{\bd}$ for every $\alpha<\omega_1$.
\end{theorem}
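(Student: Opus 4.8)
The plan is to recast a normal $2$-bounded colouring as a sequence of matchings. By normality, any two pairs with distinct top coordinates receive distinct colours, so the colour classes partition according to the top element: writing $S_\beta$ for the colours realised by pairs with top $\beta$, the sets $S_\beta$ are pairwise disjoint. Within a fixed top $\beta$ the map $\alpha\mapsto f(\alpha,\beta)$ sends $\beta$ into $S_\beta$, and $2$-boundedness makes it at most $2$-to-$1$; its nontrivial fibres thus define a partial matching $M_\beta$ on $\beta$ (pair $\alpha,\alpha'$ when $f(\alpha,\beta)=f(\alpha',\beta)$), and conversely every such sequence arises this way. Under this dictionary a set $X$ is polychromatic precisely when, for every $\beta\in X$, the set $X\cap\beta$ contains no edge of $M_\beta$; equivalently $X$ contains no ``bad triple'' $\alpha_1<\alpha_2<\beta$ with $\{\alpha_1,\alpha_2\}\in M_\beta$.

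The key structural observation is that if we enumerate $X$ increasingly, the newest element is always the maximum, so it can only be the \emph{top} of a bad triple. Hence, writing $B_{a,b}=\{\beta>b:\{a,b\}\in M_\beta\}$, extending an already-chosen polychromatic set $F$ by a new top $w$ is permitted exactly when $w\notin\bigcup_{a<b\in F}B_{a,b}$; there are no backward constraints. Building a polychromatic set of a prescribed order type therefore reduces to a recursion in which, at each step, the next point must be chosen outside a union of the sets $B_{a,b}$ over previously chosen pairs, while keeping the reservoir $Z=\{w:F\cup\{w\}\text{ is polychromatic and }w>\max F\}$ cofinal in $\omega_1$; I would maintain cofinality of $Z$ as the invariant.

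The finite case, and with it order type $\omega$, can be secured by reflection together with $2$-boundedness. Suppose $F=\{x_0<\dots<x_{n-1}\}$ is polychromatic with cofinal reservoir $Z$, and suppose for contradiction that for every $w\in Z$ the next reservoir $\{u>w:F\cup\{w,u\}\text{ polychromatic}\}$ is bounded. Fix a countable $M\prec H(\theta)$ with $F,Z,f\in M$ and $\delta=M\cap\omega_1$, and fix $u^*\in Z$ with $u^*\ge\delta$. For $w\in Z\cap M$ the displayed set is bounded and lies in $M$, hence is bounded below $\delta$, so $u^*$ falls outside it while $u^*>w$; since $F\cup\{u^*\}$ is polychromatic the offending pair must involve $w$, giving $f(x_i,u^*)=f(w,u^*)$ for some $i<n$, i.e. $w$ is matched to $x_i$ in $M_{u^*}$. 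As $M_{u^*}$ is a matching the assignment $w\mapsto x_i$ is injective; but $Z\cap M$ is infinite and $\{x_0,\dots,x_{n-1}\}$ is finite, a contradiction. So some $w\in Z$ keeps the reservoir cofinal, and iterating $\omega$ times yields a polychromatic set of order type $\omega$.

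The remaining, genuinely hard, case is to pass through limit order types, equivalently to add a new top above an \emph{infinite} chosen set: there the injection above lands in an infinite set and the argument collapses. Indeed a single ``cofinally bad'' pair (one with $B_{a,b}$ uncountable) can by itself exclude uncountably many tops, and countably many such pairs may cover a tail of $\omega_1$. The plan is to forbid this by keeping the chosen set free of cofinally bad pairs: if $D=\{v:\{u:B_{v,u}\text{ is uncountable}\}\text{ is unbounded}\}$ is bounded, then working above $\sup D$ each point has only boundedly many cofinally-bad partners, so by regularity of $\omega_1$ the accumulated bad sets stay bounded through any countable recursion and limit stages become free. The crux is the opposite, clique-like regime where $D$ is unbounded. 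The main tool there is the disjointness forced by $2$-boundedness: the sets $\{B_{v,u}:u\}$ incident to a fixed $v$ are pairwise disjoint, since a common top would match $v$ to two points in one matching. I would exploit this along a continuous chain of elementary submodels to choose each new point generically so that the accumulated union of bad sets remains non-cofinal (say co-stationary) across limits. Controlling that union in the clique regime is the step I expect to be the main obstacle, and presumably where the full force of the theorem lies.
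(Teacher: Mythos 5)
Your reduction to matchings and your elementary-submodel argument for the successor step are correct, and they establish $\omega_1\pto(\omega)^2_{\bd}$. But the theorem asks for every $\alpha<\omega_1$, and the limit case is exactly where your proposal stops: you yourself flag the ``clique regime'' (where $D$ is unbounded) as unresolved, and the sketched remedy---controlling the accumulated union of bad sets co-stationarily along a continuous chain of submodels---is not carried out and is far from routine. As written, then, this is a proof of the order-type-$\omega$ case together with a research plan, not a proof of the statement. The root of the difficulty is a design choice: by always adding the new point as a \emph{maximum}, you force yourself to confront, at each limit ordinal, the problem of finding a top compatible with an already infinite set, and $2$-boundedness gives you no finiteness there.

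The idea you are missing, and the one the paper's proof uses, is to give up monotone construction entirely. Fix in advance a chain $\tup{M_i\mid i\leq\alpha}$ of countable elementary submodels of $H(\omega_2)$ with the colouring in $M_0$, enumerate the positions $\alpha+1=\set{\alpha_n\mid n<\omega}$, and insert one point per step into the interval $\left(M_{\alpha_n}\cap\omega_1,\ M_{\alpha_n+1}\cap\omega_1\right)$, in this $\omega$-enumeration order rather than in increasing ordinal order. At every stage the chosen set $F$ is \emph{finite}, so there is no limit stage to pass. The new point faces two kinds of constraints: compatibility with the finitely many chosen points below it, for which elementarity (witnessed by the first point, chosen above all the models) yields an unbounded set of candidates inside $M_{\alpha_n+1}$; and compatibility with the finitely many chosen points \emph{above} it, where each colour $f(y,z)$ with $z\in F$ above the interval excludes only finitely many candidates because the fibres of $f$ are finite---this is precisely where $2$-boundedness earns its keep. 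Your observation that a monotone construction has ``no backward constraints'' is true but is exactly what this construction trades away, and $2$-boundedness makes backward constraints cheap (finite) while eliminating the limit problem; the final set has order type $\alpha+1$ because of the static scaffolding of models, not the temporal order of choices. As a cautionary note, the monotone route you propose, pushed to order type $\omega_1$, runs into genuine set-theoretic obstructions ($\omega_1\pto(\omega_1)^2_{\bd}$ needs $\PFA$-style hypotheses and fails under $\CH$), which suggests the limit obstacle you hit is not merely technical.
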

\begin{proof}
Given $d\colon[\omega_1]^2\to\omega_1$ which is normal and $2$-bounded, take $M_i\prec H(\omega_2)$ countable and increasing such that $d\in M_0$, where $i\leq \alpha$. Enumerate $\alpha+1$ as $\{\alpha_n\mid n<\omega\}$, with $\alpha_0=\alpha$ as the top point. We now construct our polychromatic set by recursion on $\omega$. First we pick $x_0\in \omega_1\setminus M_\alpha$.

By the $n$th step for $n\geq 1$, suppose we took care of $n$ points, $F=\{x_i \mid i<n\}$, such that $x_i\in M_{\alpha_i+1}\cap \omega_1\setminus M_{\alpha_i}$. We want to add $x_n$ to our set, and we want it to lie in $M_{\alpha_n+1}\setminus M_{\alpha_n}$. $x_0$ is an evidence to show that in $M_{\alpha_n+1}$ there is an unbounded set of points, $\beta$, which can (just like $\alpha_0$) be added to $\{x_i\mid\alpha_i<\alpha_n\}$ and form a polychromatic set.

Which $\beta$ will we choose in this set (and inside $M_{\alpha_n+1}\setminus M_{\alpha_n}$)? There are finitely many points which lie in $F$ and above $M_{\alpha_n}$, and they obtain finitely many colors with the points of $F$ below $M_{\alpha_n}\cap \omega_1$. So we can find a large enough $\beta'$ to work as our $x_n$ and to ensure that $F\cup\{x_n\}$ is polychromatic.

Since we built $X$ from intervals between the models, we get it to have an order type of $\alpha+1$.
\end{proof}
Some remarks on the proof. It can be easily generalised assuming $\CH$, or Chang Conjecture on some uncountable cardinals. 

\begin{theorem}
$\PFA\implies\omega_1\pto(\omega_1)^2_{\bd}$.
\end{theorem}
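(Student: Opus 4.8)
The plan is to realise a polychromatic set of order type $\omega_1$ as the union of a sufficiently generic filter on the natural finite-approximation forcing, and to obtain that filter from $\PFA$. Let $d\colon[\omega_1]^2\to\omega_1$ be normal and $2$-bounded, and let $\PP$ be the poset whose conditions are the finite $d$-polychromatic subsets of $\omega_1$, ordered by reverse inclusion. If $G$ is a filter on $\PP$ then $X_G=\bigcup G$ is polychromatic, and since $X_G\subseteq\omega_1$ it has order type $\omega_1$ as soon as it is unbounded. Thus, granting that $\PP$ (or a suitable dense suborder of it) is proper, $\PFA$ applied to the $\aleph_1$ sets $D_\gamma=\{p : p\cap(\gamma,\omega_1)\neq\emptyset\}$ would yield such an $X_G$. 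Two things must be checked: that $\PP$ is proper, and that the height requirements $D_\gamma$ really are dense.

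For properness, fix a countable $M\prec H(\omega_2)$ with $d,\PP\in M$, put $\delta=M\cap\omega_1$, and let $p\in\PP\cap M$. I would take as a candidate master condition $q=p\cup\{x\}$ for a single point $x\geq\delta$, and verify $(M,\PP)$-genericity directly: given a dense open $D\in M$ and $r\leq q$, write $r=a\cup b$ with $a=r\cap\delta$ (a finite subset of $M$, hence $a\in M$) and $b=r\setminus\delta$ the ``top block''. The task is to find $s\in D\cap M$ compatible with $r$. Here the two hypotheses on $d$ do the work. If I look for $s$ of the form $a\cup c$ with $c\subseteq\delta$, the only genuinely new pairs in $s\cup r$ are the cross pairs $\{\gamma,\beta\}$ with $\gamma\in c$ and $\beta\in b$; by normality such a pair can only collide with another pair of the \emph{same} top $\beta$, and by $2$-boundedness, for each $\beta\in b$ the set of $\gamma<\delta$ whose colour $d(\gamma,\beta)$ clashes with one of the finitely many colours already used above $\beta$ is \emph{finite}. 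Consequently the ``forbidden'' points form a finite set, and since $D$ is dense I can, inside $M$ and by elementarity, descend from $a$ into $D$ while steering the finitely many new points $c$ clear of these obstructions. This produces $s\in D\cap M$ with $s\cup r$ polychromatic, establishing that $D\cap M$ is predense below $q$.

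The delicate point---and the main obstacle---is the density of the $D_\gamma$, i.e.\ that one can always climb higher while remaining polychromatic. This is genuinely nontrivial for normal $2$-bounded colourings: one can manufacture a pair $\{y_0,y_1\}$ with $d(y_0,x)=d(y_1,x)$ for all $x>y_1$, so that no point can ever be added above it, and such a ``stuck'' condition blocks every $D_\gamma$ below it. The remedy, mirroring the proof of the countable-type theorem above, is a reflection argument: for a condition $p$ lying in a model $M$ as above, the set $G_p=\{x : p\cup\{x\}\text{ is polychromatic}\}$ is definable from $p,d\in M$, hence lies in $M$, so if it is unbounded it must meet $[\delta,\omega_1)$ and $p$ can be extended past $\delta$---exactly as the top point $x_0$ in the earlier proof witnesses, by elementarity, an unbounded supply of admissible points inside every relevant submodel. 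I would therefore run the construction below the set of \emph{hereditarily extendable} conditions (those all of whose extensions still climb cofinally), show by this reflection that such conditions are dense and closed under the required upward steps, and verify that the amalgamation of the previous paragraph stays within this suborder.

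Granting these two lemmas, the proof concludes in the usual way: $\PP$ restricted to the hereditarily extendable conditions is proper, the sets $D_\gamma$ are dense there, and $\PFA$ furnishes a filter $G$ meeting all $\aleph_1$ of them. Then $X_G=\bigcup G$ is $d$-polychromatic and unbounded in $\omega_1$, hence of order type $\omega_1$, which is precisely $\omega_1\pto(\omega_1)^2_{\bd}$. I expect essentially all the difficulty to be concentrated in the extendability/reflection step: normality and $2$-boundedness make the local amalgamation for properness clean, but taming the globally ``stuck'' configurations---so that the height sets are dense on a proper suborder---is where the real combinatorial care is needed.
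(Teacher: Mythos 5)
You correctly identify the two tasks, and your amalgamation analysis for properness is essentially the paper's mechanism: there, one considers the definable set $B\in N$ of tuples that ``play the same role'' as the external points of $s$, shows $B$ is uncountable and rich, and uses $2$-boundedness plus normality to dodge the finitely many colour clashes with the points beyond $K=N\cap H(\omega_2)$ --- exactly your ``finitely many forbidden $\gamma$'' computation, except that your write-up hides the richness step: the forbidden set is finite but \emph{not} in $M$ (it is defined from the top block $b$), so ``steering clear inside $M$'' requires showing the set of candidate new-point tuples in $M$ is uncountable in each coordinate, which is the content of the paper's richness claim. You also correctly spot that the naive height-density fails because of stuck pairs; the paper's sketch glosses over this, so crediting you here is fair. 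The gap is in your proposed repair.

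As literally defined, your class of \emph{hereditarily extendable} conditions (``those all of whose extensions still climb cofinally'') is empty whenever stuck pairs occur unboundedly --- which your own example permits: any condition can be extended by adjoining a stuck pair placed above its maximum, and that extension never climbs again. So the class must be taken as a fixed point (extensions \emph{within} the suborder), and then the two decisive claims --- that this suborder is nonempty with the height sets $D_\gamma$ dense in it, and that your amalgamation closes off inside it so properness survives the restriction --- are exactly what is missing. Your reflection remark only shows that \emph{if} $G_p$ is unbounded then it meets $[\delta,\omega_1)$; it does not show $G_p$ is unbounded, and $G_p$ in any case controls only one-step extensions. The paper resolves precisely this by building the models into the conditions: a condition is a pair $(p,\cM)$ with $p\cap(\delta_i,\delta_{i+1})$ at most a singleton, so any two points of $p$ are separated by some $M_i$ containing the lower one together with $d$; since a point $y$ has at most one ``eventual twin'' $y'$ (a $y'$ with $d(y,x)=d(y',x)$ for all large $x$ --- two such would violate $2$-boundedness), the twin is definable from $y$ and $d$ and hence lies in $M_i$, so model-separated points can never be stuck, and elementarity supplies an unbounded stock of admissible next points. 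The same models serve as the genericity anchors ($s$ is $N$-generic because $K$ is a member of the chain $\cM_s$, not because of an added point $x\geq\delta$ as in your candidate master condition), and the dense sets handed to $\PFA$ are phrased as ``the model chain reaches height $\alpha$''. Carrying out your plan would mean re-deriving this structure externally for the bare poset of finite polychromatic sets, and it is not clear the restricted suborder is proper at all; as it stands, the core difficulty you flag is real and unresolved in your outline.
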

The proof below is due to Uri Abraham and James Cummings, the result is due to Stevo Todorcevic.
\begin{proof}
Let $d\colon[\omega_1]^2\to\omega_1$ be a normal and $2$-bounded function. We define a partial order $\PP$, whose conditions are $t=(p,\cM)$ such that:
\begin{enumerate}
\item $p$ is a finite polychromatic set.
\item $\cM=(M_0\in M_1\in\dots\in M_n)$, and $M_i\prec H(\omega_2)$ is a countable elementary submodel.
\item We denote by $\delta_i^t=M_i\cap\omega_1$, and if $t$ is clear from context, we omit it from the superscript. Now we require that $p\cap(\delta_i,\delta_{i+1})$ is at most a singleton for all $i<|p|$.
\footnote{We may assume, by density arguments, that $|p|=n$, but it is kind of irrelevant.}
\end{enumerate}
The order on $\PP$ is given by pointwise inclusion, i.e.\ $(p,\cM)\leq (p',\cM')$ if $p'\subseteq p$ and $\cM'\subseteq\cM$.

We want to prove now that $\PP$ is a proper partial order. Suppose that $s=(p_s,\cM_s)$ is a condition such that for some $K\in\cM_s$, $K=N\cap H(\omega_2)$ where $N\prec H(\theta)$ of a large enough $\theta$ such that $\PP\in N$. We want to prove that $s$ is an $N$-generic condition.

Let $D\in N$ a dense subset of $\PP$. By extending $s$ if necessary, we may assume that it is already in $D$. In $N$ we want to extend $s\restriction N=(p_s\cap N,\cM_s\cap N)$ into a condition of $D\cap N$ which is compatible with $s$. 

In order to express the idea of the proof without too many indexes, we take a simple but illustrative case and assume that $\cM=(M_0,\dots, M_n, M_{n+1},M_{n+2})$ where $M_n=K=N\cap H(\omega_2)$, and $p_s= p_s\cap K\cup \{x_{n+1},x_{n+2}\}$ where $x_n\in \delta_{n+1}\setminus \delta_n$ and $x_{n+1}\in \delta_{n+2}\setminus \delta_{n+1}$. $p_s=p_s\cap K\cup \{x_n,x_{n+1}\}$.

We want to extend $s\restriction K$ to some condition in $D\cap K$ that remains compatible with the part of $s$ beyond $K$. The problem is ensuring that the working part of the extension remains polychromatic even with the addition of the two external ordinals $x_n$ and $x_{n+1}$.

Work now in $N$, this is a structure which is aware of $\PP$ and the forcing relation. Define $B$ to be the set of pairs $\tup{(\mu_1,y_1),(\mu_2,y_2)}$ for which there is a condition $r\in D$ with $r\restriction K = s\restriction K$, and $r$ contains two additional models and two additional ordinals $y_1$ and $y_2$ where $(\mu_1,y_1)$ and $(\mu_2,y_2)$ play in $r$ the same role that $(\delta_n,x_n)$ and $(\delta_{n+1},x_{n+1})$  play in $s$. 

As $B$ is definable, we have $B\in N$, and since $B\in H(\omega_2)$, $B\in M_n$. $B$ is uncountable since $\tup{(\delta_1,x_1),(\delta_2,x_2)}$ is an evidence that the set is not bounded. In fact, $B$ is `richer': there is an uncountable set of pairs $(\mu_1,y_1)$ for each one of which there is an uncountable set of pairs $(\mu_2,y_2)$ such that $\tup{(\mu_1,y_1),(\mu_2,y_2)}\in B$. (First argue that there are uncountably many $(\mu_2,y_2)$ such that  $\tup{ (\delta_1,x_1) ,(\mu_2,y_2)}\in B$.)

Then we can again ensure that there is some $\tup{(\mu_1,y_1),(\mu_2,y_2)}\in B$, by the fact that $B$ is rich, such that the ordinals $\{y_1,y_2\}$ when added to $p_s$ is polychromatic. Now using $\PFA$ we get a polychromatic set of type $\omega_1$ by picking a generic meeting all conditions with models of at least height $\alpha$ as the $\alpha$th dense open set.
\end{proof}

\begin{theorem}
$\MM$ implies that if $f\colon[\omega_2]^2\to\omega_2$ is normal is $<\omega$-bounded, then there is a \textit{closed} polychromatic set of order type $\omega_1$.
\end{theorem}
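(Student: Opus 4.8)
The plan is to adapt the forcing from the proof of $\PFA\implies\omega_1\pto(\omega_1)^2_{\bd}$ above to the level of $\omega_2$, and to invoke $\MM$ in place of $\PFA$. So I would introduce a partial order $\PP$ whose generic object is a continuous increasing sequence $\tup{x_\xi\mid\xi<\omega_1}$ of ordinals below $\omega_2$ with polychromatic range; that range is then the desired closed set of order type $\omega_1$. A condition would be a pair $t=(p,\cM)$ where $p$ is a finite polychromatic subset of $\omega_2$ recording the values committed so far, together with a marking of which committed points are intended to be limits, and $\cM$ is a finite $\in$-chain of elementary submodels serving as side conditions, ordered by pointwise inclusion exactly as before. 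Because we now work inside $\omega_2$ and want order type $\omega_1$ with closure, the side condition should be an appropriate system (e.g.\ a two-type system à la Asper\'o--Mota, with countable models $M\prec H(\theta)$ together with models $N\prec H(\theta)$ of size $\aleph_1$ with $\omega_1\subseteq N$), so that the committed sequence can be made continuous and cofinal in an ordinal of cofinality $\omega_1$.

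Next I would set up the dense sets. For each $\xi<\omega_1$ let $D_\xi$ be the set of conditions whose committed sequence has length at least $\xi$; closure is guaranteed by forcing the limit values $x_\lambda=\sup_{\eta<\lambda}x_\eta$ into the range at every countable-cofinality $\lambda$. Applying $\MM$ to the family $\tup{D_\xi\mid\xi<\omega_1}$ yields a filter $G$ meeting each $D_\xi$, and $\bigcup_{t\in G}p_t$ is then a closed polychromatic set of order type $\omega_1$ lying in $V$.

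The combinatorial heart for successor extensions is the same richness phenomenon exploited in the $\PFA$ proof: since $f$ is normal, a freshly placed top ordinal $y$ receives only colours $f(a,y)$ whose top coordinate is $y$, so these never clash with colours already present; and since $f$ is $<\omega$-bounded, for each already placed point $a$ only finitely many bottom coordinates $b$ satisfy $f(b,y)=f(a,y)$, so the set of admissible next points is cofinal and one can always extend while keeping $p$ polychromatic. The genuinely new difficulty is at the forced limit points $x_\lambda$, which are sups and hence cannot be chosen: one must have arranged the cofinal sequence below $x_\lambda$ so that $f(x_{\eta_1},x_\lambda)\neq f(x_{\eta_2},x_\lambda)$ for all $\eta_1\neq\eta_2$. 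Here $<\omega$-boundedness is again decisive, since relative to the fixed future top $x_\lambda$ each already-chosen point excludes only finitely many bottom coordinates, so at each finite stage the ``bad'' set is bounded below $x_\lambda$ and the next point can be taken above it; the corresponding density statements are what must be verified.

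The main obstacle I expect is proving that $\PP$ preserves stationary subsets of $\omega_1$ and does not collapse $\omega_2$, which is precisely why $\MM$ rather than $\PFA$ is invoked: with size-$\aleph_1$ models present, and with the closure requirement demanding that the generic sequence be continuous and cofinal in a point of cofinality $\omega_1$, the natural generic conditions are only \emph{semi}-generic, so $\PP$ is at best stationary-set-preserving and the clean properness argument of the $\PFA$ proof is unavailable. Establishing semiproperness amounts to a reflection argument at the $\aleph_1$-sized models $N$ showing that a suitable $t$ forces $\sup(N\cap\omega_1)$ to be computed correctly, carried out simultaneously with the limit-point analysis above so that polychromaticity survives through all the forced sups. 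I expect this interleaving of the semigenericity argument with the colour-collision bookkeeping to be the technically demanding step.
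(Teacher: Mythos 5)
Your overall strategy---transplant the $\PFA$ poset to $\omega_2$ and invoke $\MM$---matches the paper's intent, but your treatment of the limit points contains a genuine gap, and it sits exactly where the paper's sketch deploys its one new idea. You write that ``relative to the fixed future top $x_\lambda$ each already-chosen point excludes only finitely many bottom coordinates, so at each finite stage the bad set is bounded below $x_\lambda$ and the next point can be taken above it.'' This inverts the quantifiers: $<\omega$-boundedness bounds the bottom coordinates only \emph{after} the top $y$ is fixed, whereas $x_\lambda$ is a supremum and is never available at any finite stage---it is determined by the points you choose, not chosen itself. What you need is that for the eventual sup $\delta$ the map $\eta\mapsto f(x_\eta,\delta)$ is injective; but for a fixed pair $a<b$ of already-committed points, the set of ordinals $\delta$ at which $f(a,\delta)=f(b,\delta)$ can be unbounded in $\omega_2$ (normality only forbids collisions across \emph{distinct} tops, and $<\omega$-boundedness says nothing about how the finite collision sets distribute as $\delta$ varies). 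Since a supremum cannot be ``taken above'' a bad set, no density argument over bottom coordinates alone secures polychromaticity at the forced limits, and your marking of intended limit points does not repair this.

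The paper's sketch resolves this with different machinery on both counts. First, there is no separate working part $p$: conditions are pairs $(\cM,\cB)$ where $\cM$ is a finite chain of countable elementary submodels of $H(\omega_3)$, and the polychromatic set is the trace $\{\sup(M_i\cap\omega_2)\mid i<|\cM|\}$ itself (the sketch writes $\omega_1$, evidently a typo), so closure at limits comes for free from continuity of chains of models rather than from bookkeeping. Second---and this is the missing idea---$\cB$ is a finite set of disjoint intervals of $\omega_2$ promised to be disjoint from the polychromatic set; since a set avoiding a (half-open) interval has all of its limit points avoiding it as well, these promises constrain where future suprema can land and are precisely what block the unboundedly many dangerous ordinals $\delta$ identified above. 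Your proposal has no analogue of this anticipation component. Note also that the paper's side conditions are a single chain of countable models, not a two-type Asper\'o--Mota system with $\aleph_1$-sized models; your instinct that $\MM$ is needed because the poset is only stationary-set preserving is reasonable, but the two-type/semiproperness apparatus you introduce is not part of the paper's construction and does not substitute for the interval promises.
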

\begin{proof}[Sketch of Proof]
The idea is similar to the above. Here the conditions are $(\cM,\cB)$ where $\cM$ is a finite chain of countable elementary submodels of $H(\omega_3)$ such that the set $\{\sup(M_i\cap\omega_1)\mid i<|\cM|\}$ is polychromatic, and $\cB$ is a set of disjoint intervals in $\omega_2$ which are also disjoint of the polychromatic set.
\end{proof}
\subsection{Question and ideas}
Stevo Todorcevic proved the consistency of this polychromatic principle without using large cardinals. But we can also show that Martin's Axiom alone is not enough to prove the polychromatic result.

\vspace{2em}
\textbf{Question 1:} Is there a forcing axiom strengthening Martin's Axiom, which can prove the theorem, which does not require large cardinals?

\textbf{Ideas:}
Asper\'o--Mota style forcing axioms might work. It seems that perhaps $\MA^{1.5}_{\aleph_2}(\text{layered})$ might be a suitable candidate for this. Some relevant ideas. Let $\cN$ be a finite set of countable substructures, we say that $\cN$ is \textit{layered} if for all $N_0,N_1\in\cN$, if $\delta_{N_0}<\delta_{N_1}$, then there is $N\in\cN$ such that $\delta_N=\delta_{N_1}$ and $N_0\in N$.

Say now that a forcing $\PP$ has $\aleph_{1.5}$-c.c.\ with respect to layered families if for every large enough $\theta$, there is a club of $D\subseteq [H(\theta)]^{\omega}$ such that for all $\cN\subseteq D$ which is finite and layered, for every $p\in N\in\cN$ with $\delta_N$ minimal, there is a $q\leq p$ which is $\PP$-generic for $M$, for all $M\in\cN$.

Now, $\MA^{1.5}_{\aleph_1}(\text{layered})$ is the forcing axiom for the class of $\aleph_{1.5}$-c.c.\ with respect to layered families, for meeting $\aleph_1$ dense sets. This forcing axiom implies that $\omega_1\pto[\omega_1]^2_{\bd}$. The proof is the same as presented by Uri.

In the paper of David and Miguel,\footnote{\href{https://doi.org/10.1007/s11856-015-1250-0}{Asper\'o and Mota, A generalization of Martin's axiom. (Isr.~J.~Math.\ 210(1) 193--231.)}} we can force this with continuum $\aleph_2$ without large cardinals. The natural extension of the question now is whether or not we can get the forcing axiom, or at least the polychromatic result, with a larger continuum? All that we know is that the polychromatic theorem is inconsistent with $\CH$.

\vspace{2em}
\textbf{Question 2:} Can you push the result under $\MM$ to get a polychromatic set of type $\omega_1+1$? What about higher cardinals, perhaps with higher forcing axioms?
\section{Example: Construction of a Boolean algebra of width \texorpdfstring{$\omega$}{w} and height \texorpdfstring{$\omega_2$}{w2} with 2-types side conditions (presented by David Asper\'o)}

\begin{definition}
Let $\kappa$ and $\lambda$ be two cardinals, typically $\kappa<\lambda$. We say that a partial order $\leq$ on $\kappa\times\lambda$ is a Locally Compact Scattered (LCS) space if the following properties hold:
\begin{enumerate}
\item If $(\alpha_0,\rho_0)<(\alpha_1,\rho_1)$, then $\rho_0<\rho_1$.
\item For all $x_0,x_1$, there is a finite set $b$ such that if $x\leq x_0$ and $x\leq x_1$, then there is some $y\in b$, then $x\leq y$. Such $b$ is called a \textit{barrier}.
\item For every $(\alpha,\rho)$ and every $\rho'<\rho$, then there are infinitely many $\beta$ such that $(\beta,\rho')<(\alpha,\rho)$.
\end{enumerate}
\end{definition}
Baumgartner and Shelah forced such a partial order on $\omega\times\omega_2$. Boban proved the same using side-conditions with Giorgio Venturi.\footnote{\href{https://arxiv.org/abs/1110.0610}{Velickovic and Venturi, Proper forcing remastered. (Appalachian set theory 2006--2012, 331--362.)}} David tries to guess the proof.
\begin{remark}
Why does the ``obvious'' approach using finite approximations of such a partial order fail? The reason is that on the $\omega_1$ level, with finite conditions, for every pair of points---and there are only $\omega$ of those---we can ensure the barrier is arbitrarily high below $\omega_1$, and this will code a surjection from $\omega$ onto $\omega_1$.
\end{remark}

We define our forcing, $\PP$, where $p\in\PP$ is a triplet $(\leq_p,b_p,\cN_p)$ such that:
\begin{enumerate}
\item $\leq_p$ is a finite approximation to the partial order.
\item $b_p$ is a finite approximation to the barrier function.
\item $\cN_p$ is a finite sequence of models of $2$-types. Namely, a sequence $(Q_i\mid i<n)$ such that $Q_i\prec H(\omega_2)$ with the following properties:
\begin{enumerate}
\item $Q_i$ is countable, or of size $\aleph_1$ and $\sigma$-closed.
\item $Q_i\in Q_{i+1}$ for all $i<n-1$.
\item For every $Q_i,Q_j$, there is some $k$ such that $Q_i\cap Q_j=Q_k$.
\item $b_p\restriction Q_i\in Q_i$ for all $i<n$.
\end{enumerate}
\end{enumerate}
We order $\PP$ by pointwise reverse inclusion. Then quite obviously, the generic will define the wanted partial order, but we need to verify that $\omega_1$ was not collapsed. We prove this by arguing that $\PP$ is strongly proper.
\begin{proposition}
$\PP$ is proper for $\sigma$-closed models of size $\aleph_1$. Namely, if $N^*\prec H(\theta)$ with $\PP\in N^*$ of size $\aleph_1$ and $\sigma$-closed, then any condition in $N^*\cap\PP$ can be extended to an $N^*$-generic condition.
\end{proposition}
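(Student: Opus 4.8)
The plan is to show that the condition obtained from $p$ by placing $Q:=N^*\cap H(\omega_2)$ on top of the side-condition sequence is $N^*$-generic, by proving an amalgamation lemma for the restriction map $q\mapsto q\restriction Q$. First I would verify that $Q$ is a legitimate large-type model: since $N^*$ has size $\aleph_1$ and is $\sigma$-closed we have $\omega_1\subseteq N^*$, so $Q\prec H(\omega_2)$ has size $\aleph_1$, is $\sigma$-closed, and has hereditary cardinality $\aleph_1$, hence $Q\in H(\omega_2)$. Each $Q_i\in\cN_p$ lies in $N^*\cap H(\omega_2)=Q$ (as $p\in N^*$), so $Q_i\in Q$, and an enumeration argument using $\omega_1\subseteq Q$ gives $Q_i\subseteq Q$, whence $Q_i\cap Q=Q_i$; likewise $b_p\restriction Q=b_p\in Q$. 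Thus $p^*:=(\leq_p,b_p,\cN_p{}^\frown\tup{Q})$ is a condition extending $p$.

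Next, fix a dense $D\in N^*$ and $q\le p^*$; we may assume $q\in D$. Let $\bar q:=q\restriction Q$ be the condition retaining the models of $\cN_q$ lying below $Q$ in the chain together with $\leq_q$ and $b_q$ restricted to points of $Q$. By the same $\in$-versus-$\subseteq$ argument (each predecessor $Q_{i+1}$ of $Q$ satisfies $Q_{i+1}\subseteq Q$, so $Q_i\in Q_{i+1}\subseteq Q$), all these models are elements of $Q$, and since $\bar q$ is finite with all constituents in $Q\prec H(\omega_2)$ we get $\bar q\in Q\subseteq N^*$. As $\PP,D,\bar q\in N^*$ and $D$ is dense, elementarity of $N^*$ yields $r\in D\cap N^*$ with $r\le\bar q$. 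The decisive observation is that every model of $\cN_r$ lies in $Q$: such a model is an elementary submodel of $H(\omega_2)$ of size at most $\aleph_1$, hence an element of $H(\omega_2)$, and it lies in $N^*$, so it belongs to $N^*\cap H(\omega_2)=Q$ (and, by enumeration, is a subset of $Q$).

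It remains to prove that any such $r$ is compatible with $q$; taking $r\in D$ then yields the proposition (and the stronger statement that $\bar q$ is a reduction, i.e.\ strong $N^*$-genericity). I would define $r\wedge q$ with order $\tcl(\leq_r\cup\leq_q)$, side condition $\cN_r{}^\frown\tup{Q}{}^\frown(\text{the models of }\cN_q\text{ above }Q)$, and barrier extending $b_r\cup b_q$. The model sequence is an $\in$-chain closed under intersection: $\cN_r$ extends the chain $\cN_{\bar q}$ (all below $Q$), $Q$ sits above them, and for $M\in\cN_r$ and $M'\in\cN_q$ above $Q$ one has $M\subseteq Q$, so $M\cap M'=M\cap(Q\cap M')$, where $Q\cap M'$ is either $Q$ or a model of $\cN_{\bar q}\subseteq\cN_r$; in both cases $M\cap M'$ is already a model of $\cN_r$. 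The order is level-monotone and acyclic because each of $\leq_r,\leq_q$ is and levels increase strictly along $<$, so transitive closure creates no cycle.

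The main obstacle is the barrier. Forming $\tcl(\leq_r\cup\leq_q)$ can create new pairs $\{x_0,x_1\}$ with $x_0$ a point of $r$ inside $Q$ and $x_1$ a point of $q$ whose level lies outside $Q$, and such a pair needs a recorded barrier satisfying axiom~(2). The delicate point is that this barrier must not be forced below $\sup(Q\cap\omega_2)$ in a way that codes a surjection of $\omega$ onto an ordinal of $Q$---exactly the pathology flagged in the Remark for the naive finite-approximation forcing. The clause $b_q\restriction Q_i\in Q_i$ is what rules this out: it guarantees that the barrier data internal to each side-condition model is itself a member of that model, so the barriers $q$ has already committed to inside $Q$ are visible to, and respected by, any $r\in N^*$ refining $\bar q$; the common lower bounds of a straddling pair are then all captured by barriers living inside $Q$, at levels $Q$ already controls. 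Verifying that these inherited barriers are simultaneously consistent with $\leq_r\cup\leq_q$ and never need to be placed at a collapsing level is the heart of the argument, while the remaining axioms are routine bookkeeping.
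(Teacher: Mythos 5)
Your proposal is correct and takes essentially the same approach as the paper, whose entire proof consists of setting $q=(\leq_p,b_p,\cN_p\cup\{N^*\cap H(\omega_2)\})$ and declaring the verification straightforward; your $p^*$ is exactly this condition, and your restriction-and-amalgamation analysis (the $\in$-versus-$\subseteq$ argument, $\bar q\in Q$, and closure under intersections via $M\cap M'=M\cap(Q\cap M')$) supplies the verification the paper omits. The barrier step you flag as the delicate heart is in fact routine in this large-model case: since $Q=N^*\cap H(\omega_2)$ is $\sigma$-closed of size $\aleph_1$ with $\omega_1\subseteq Q$, the set $Q\cap\omega_2$ is an ordinal, so every point lying below a point of $Q$ has level in $Q$ and hence already belongs to $Q$; consequently any common lower bound of a straddling pair $\{x_0,x_1\}$ passes through some $y\in\bar q$ with $y\leq_q x_1$, and the finite union of the barriers $b_r(\{x_0,y\})$ over such $y$ serves as the new barrier, with no risk of the collapsing pathology (which arises only at the countable-model level treated in the paper's next proposition).
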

\begin{proof}
Let $N^*$ be as above, and $p\in N^*$, then $q=(\leq_p,b_p,\cN_p\cup\{N^*\cap H(\omega_2)\})$ is $N^*$-generic. The verification here is straightforward.
\end{proof}
\begin{proposition}
$\PP$ is strongly proper.
\end{proposition}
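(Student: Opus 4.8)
The plan is to establish strong genericity for the \emph{countable} models in the side conditions, which is exactly what is needed to conclude that $\omega_1$ is preserved (the $\sigma$-closed $\aleph_1$-models having already been handled by the previous proposition). So fix a countable $N\prec H(\theta)$ with $\PP\in N$, put $N_0=N\cap H(\omega_2)$, and let $p\in N\cap\PP$. My candidate strongly $N$-generic condition is $q=(\leq_p,b_p,\cN_p{}^\frown\tup{N_0})$, obtained by placing $N_0$ on top of the chain. This is a condition: since $p\in N$ every $Q\in\cN_p$ is an element of $N$, hence (being countable in $N$) $Q\subseteq N_0$ and $Q\in N_0$, so the chain clause and the intersection clause $Q\cap N_0=Q$ are immediate, while $b_p\restriction N_0=b_p\in N_0$ gives clause~(d). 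Clearly $q\leq p$.

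Next I would define the restriction map. Given $r\leq q$, every model of $\cN_r$ is comparable with $N_0$ under $\in$, so it is either an element of $N$ (equivalently, strictly below $N_0$) or contains $N_0$; let $r\restriction N$ be the triple obtained by intersecting each coordinate with $N$, i.e.\ $\leq_r\restriction N$, $b_r\restriction N$, and the initial segment $\cN_r\cap N$ of models below $N_0$. The point that makes this well-defined as a \emph{condition in $N$} is precisely the reflection clause $b_r\restriction N_0\in N_0$: it forces the barrier assigned to any pair of points of $N$ to consist of points of $N$, so $r\restriction N$ really is a finite object all of whose constituents lie in $N$, whence $r\restriction N\in N\cap\PP$.

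The heart of the argument is amalgamation. Suppose $r\leq q$ and $r'\in N\cap\PP$ with $r'\leq r\restriction N$; I must produce a common extension $s$. Set $\leq_s$ to be the transitive closure of $\leq_r\cup\leq_{r'}$, set $b_s=b_r\cup b_{r'}$, and let $\cN_s$ be the chain obtained by splicing $\cN_{r'}$ (all of whose models lie below $N_0$ and which refines $\cN_r\cap N$) in place of the bottom part of $\cN_r$, keeping $N_0$ and everything above it from $r$. Because any $M\in\cN_r$ sitting at or above $N_0$ satisfies $N_0\subseteq M$, it sees every countable $Q'\in\cN_{r'}$ as a subset and we get $M\cap Q'=Q'$; the genuinely delicate intersections are those of such an $M$ with an $\aleph_1$-sized $\sigma$-closed $Q'\in\cN_{r'}$, and showing that these already occur in the merged chain is one of the two places where the $2$-type bookkeeping must be checked carefully.

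The main obstacle I anticipate is verifying that $s$ is a condition at all---specifically that $\leq_s$ still admits finite barriers recorded by $b_s$, and that the reflection clause $b_s\restriction Q\in Q$ survives for every $Q\in\cN_s$. The key structural fact to exploit is that $\leq_r$ and $\leq_{r'}$ agree on $N$ (both restrict to $\leq_{r\restriction N}\subseteq\leq_{r'}$), so every \emph{new} comparability created in the transitive closure factors through a point of $N$; consequently the barrier required for any new pair can be read off from $b_r\restriction N_0$, which lies in $N_0$ and is therefore already available to $r'$. This is exactly the mechanism flagged in the Remark: the side conditions pin each barrier inside a model of its own size, so a barrier between two fixed points cannot be pushed cofinally high below $\omega_1$, and no surjection $\omega\to\omega_1$ can be coded. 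Once $s\in\PP$ is confirmed, $s\leq r,r'$ is immediate, giving strong $N$-genericity and hence the preservation of $\omega_1$.
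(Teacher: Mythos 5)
There is a genuine gap, and it appears at the very first step: you assume every model in $\cN_p$ is countable (``being countable in $N$''), but the side conditions here are of \emph{two} types---countable, or $\sigma$-closed of size $\aleph_1$. An $\aleph_1$-sized $Q\in\cN_p$ is an element of $N_0=N\cap H(\omega_2)$ but emphatically not a subset of it, so $Q\cap N_0\neq Q$, and your candidate $q=(\leq_p,b_p,\cN_p{}^\frown\tup{N_0})$ violates clause (c) (closure under intersections) whenever $\cN_p$ contains an uncountable model; it is not a condition at all. The paper's proof repairs exactly this by setting $\cN_q=\cN_p\cup\{M\}\cup\{N'\cap M\mid N'\in\cN_p\}$, i.e.\ by adding all the intersections of the uncountable models with $M=N^*\cap H(\omega_2)$. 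The same oversight propagates: your dichotomy that every model of $\cN_r$ is ``either an element of $N$ or contains $N_0$'' fails (a countable $M'\in P\in N_0$ with $P$ of size $\aleph_1$ need not belong to $N_0$), so $\cN_r\cap N$ is neither an initial segment of the chain nor obviously exhaustive of the part below $N_0$, and your restriction map needs a more careful justification than the one given.

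A second, smaller gap concerns the amalgamation. You set $b_s=b_r\cup b_{r'}$ literally, but this assigns no barrier to the \emph{mixed} pairs, one point from $r\setminus N$ and one from $r'\setminus(r\restriction N)$; the transitive closure $\leq_s$ creates new comparabilities for exactly such pairs, and the paper identifies constructing their barriers as ``the real difficulty,'' resolving it by interpolation: a point below both members of a new pair sits below interpolating points of $N$ whose barriers are already defined, and one takes the union of those barriers as the new barrier (with the further check that whenever the new pair lies in some $Q\in\cN_s$, so does its barrier---which is where the choice of $r$ must be made ``wisely''). Your closing paragraph gestures at this mechanism (``the barrier required for any new pair can be read off from $b_r\restriction N_0$''), which is the right instinct, but as written $b_s$ is not closed and $\cN_s$ is not closed under the delicate intersections of large models of $\cN_{r'}$ with models at or above $N_0$---a problem you flag but do not resolve, whereas the paper's $\cN_t$ is taken to be the full closure of $\cN_r\cup\cN_{q'}$ under intersections. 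The overall architecture (extend by the model, project, amalgamate via transitive closure) does match the paper's, but the two-type bookkeeping that makes it work is missing.
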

\begin{proof}
Suppose that $M^*\prec H(\theta)$ is a countable elementary submodel with $\PP\in M^*$, and let $p\in\PP\cap M^*$. Write $M=M^*\cap H(\omega_2)$ and define $q$ to be $(\leq_p,b_p,\cN_q)$ where $\cN_q=\cN_p\cup\{M\}\cup\{N'\cap M\mid N'\in\cN_p\}$. We claim now that $q$ is $M^*$-generic.

Let $D\in M^*$ be a dense open set, let $q'\leq q$ be an extension in $D$. We define a projection map, $\pi(q')=(\leq_{q'}\cap M,b_{q'}\cap M,\cN_{q'}\cap M)$, then $\pi(q')\in\PP\cap M^*$. Let $r\leq\pi(q')$ be a condition in $D\cap M^*$. We define $t$ such that $\leq_t$ is the transitive closure (as an order) of $\leq_r\cup\leq_{q'}$, $b_t$ is the closure of $b_r\cup b_{q'}$, and $\cN_t$ is the closure of $\cN_r\cup\cN_{q'}$ under intersections. We can do a careful analysis to see that $b_t$ is indeed a barrier function for pairs of points coming from either $q'$ or $r$. the idea is that if we have two points coming from $q'$, outside of $M^*$, then there is a point below both of them, then it is either not in $M^*$, or it is below some other two points which are in $M^*$. In the former case, there is no problem and in the latter case, we simply use the fact that the barriers of the points in $M^*$ is below the original barrier. The case for points coming from $r$ is somehow similar.

The real difficulty is to define the new barriers for pairs of points which lie in $q'\setminus\pi(q)$ and $r\setminus\pi(q)$, and making sure that these come from the models in $\cN_t$. Here the idea is that if a point lies below both of the two points, there are points interpolating this where the barriers are already defined, so we take the union of these barriers as the barriers for the new pairs.\footnote{David writes this in more detail on the board, any missing details are solely this author's fault.}

Finally, we need to ensure that if $Q\in\cN_t$ and our new pair of points is in $Q$, then the barrier is also there. There are many details to verify here, and Boban remarks that we actually need to choose $r$ wisely to simplify the argument.
\end{proof}
\section{Impossibility theorems}
\subsection{One thing}
Boban present Shelah's construction of a counterexample to forcing theorems for uncountable cardinals.\footnote{\href{https://doi.org/10.1007/s00153-003-0208-9}{Shelah, Forcing axiom failure for any $\lambda>\aleph_1$. (Arch.\ Math.\ Log.\ 43(3), 285--295.)}}

The forcing $\PP$ has conditions $p$ such that $p$ is a pair of sequences of length $\alpha$, for some $\alpha<\omega_2$, $(S_\xi^p;C^p_\xi\mid\xi\leq\alpha)$ such that:
\begin{enumerate}
\item $S^p_\xi$ is stationary in $\omega_2$.
\item $C^p$ is an approximation for a $\square_{\omega_1}$ sequence.
\item $C^p_\xi\cap S^p_\xi=\varnothing$ for all $\xi$.
\item If $\zeta\in C^p_\xi$, then $S^p_\xi=S^p_\zeta$.
\end{enumerate}
The order is extension of the two sequences.

Shelah then claims that this forcing does not add bounded subsets of $\omega_1$, and preserves stationary sets of $\omega_2$. If we have a forcing axiom for meeting $\aleph_2$ dense sets, then we can meet all the dense sets of the form ``the condition has a sequence of length at least $\alpha$''. But now we can get quite a few of these, and this gives you a club-guessing kind of problem.
\subsection{Another thing}
Shelah proved the following theorem.
\begin{theorem}
Given any $\tup{C_\alpha\mid\alpha\in S^2_1}$ which is a club sequence (i.e.\ $C_\alpha$ is a club in $\alpha$), there is a sequence $\tup{f_\alpha\mid\alpha\in S^2_1}$ of functions such that $f_\alpha\colon C_\alpha\to 2$ for all $\alpha$ and for which there is no function $G\colon\omega_2\to 2$ such that for all $\alpha\in S^2_1$, $f_\alpha(\xi)=G(\xi)$ for all $\xi$ on a tail of $C_\alpha$.
\end{theorem}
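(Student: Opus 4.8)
The plan is to argue by contradiction and to build $\langle f_\alpha\mid\alpha\in S^2_1\rangle$ by a diagonalization that pre-empts every possible uniformizing $G$. Suppose $G\colon\omega_2\to 2$ witnesses uniformization, so that for each $\alpha\in S^2_1$ the function $f_\alpha$ agrees with $G$ on a tail of $C_\alpha$. Writing $C_\alpha=\{c_\alpha(i)\mid i<\omega_1\}$ in increasing order, the threshold is some index $i_\alpha<\omega_1$. Since $S^2_1$ is stationary in the regular cardinal $\omega_2$ and $\alpha\mapsto i_\alpha$ maps into $\omega_1<\omega_2$, this map is constant on a stationary $S\subseteq S^2_1$, say with value $i^*$; shrinking once more by Fodor applied to the regressive map $\alpha\mapsto c_\alpha(i^*)$, we may also fix a common base point $\gamma^*<\omega_2$. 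Thus on a stationary set $f_\alpha$ and $G$ agree on the whole tail $C_\alpha\setminus\gamma^*$. Note that we cannot simply feed this into a club-guessing sequence on $S^2_1$, since club guessing on $S^2_1$ is \emph{not} a theorem of $\ZFC$ (as David's later remark on coherent $\omega_3$-scales makes clear); the combinatorial work must therefore be pushed into the construction of $f$ itself.

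For the construction I would exploit the one guessing principle that \emph{is} available in $\ZFC$ at this level, namely club guessing on the cofinality-$\omega$ points: fix once and for all a sequence $\langle e_\xi\mid\xi\in S^2_0\rangle$ with $e_\xi$ cofinal in $\xi$ of order type $\omega$ and with the property that every club of $\omega_2$ contains $e_\xi$ for stationarily many $\xi$. For each $\alpha\in S^2_1$ the limit points of $C_\alpha$ of cofinality $\omega$ form a club of $\alpha$ contained in $S^2_0$, so along these points $C_\alpha$ interacts with the fixed guessing sequence. The idea is to define $f_\alpha$ so that its values along (a cofinal family of) these traces encode a \emph{diagonal bit}: whenever a trace $e_\xi$ is cofinally absorbed into $C_\alpha$, I colour the relevant points so that the resulting pattern on $e_\xi$ cannot be matched cofinally by the values of any single global $G$ there (for instance by forcing infinitely many sign changes dictated by $\xi$ itself, so that no fixed $G$-value on those points can agree with $f_\alpha$ on a final segment).

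To finish one applies the guessing property to a closure club of $G$: let $D$ be the club of $\delta<\omega_2$ closed under $G$ and under the construction, pick stationarily many $\xi\in S^2_0$ with $e_\xi\subseteq D$, and locate an $\alpha\in S$ whose club $C_\alpha$ runs through such a $\xi$ above $\gamma^*$; the engineered colours on $e_\xi$ then contradict the agreement $f_\alpha=G$ on $C_\alpha\setminus\gamma^*$. The hard part—and the real content of the theorem—is precisely the matching in the last two steps: the sequence $\langle e_\xi\rangle$ is fixed in advance while the traces $e_\xi$ need not lie inside the arbitrarily prescribed $C_\alpha$, and one must guarantee that the diagonal colouring survives on a \emph{tail} of $C_\alpha$ (not merely at bounded places) for stationarily many $\alpha$ at once. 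Overcoming this is where the uncountable cofinality of each $C_\alpha$ is used: it provides enough room to absorb the countable traces cofinally, so that a single fixed colouring can defeat an arbitrary $G$ chosen afterwards. I expect this reconciliation, one fixed construction against an arbitrary later-chosen $G$, to be the main obstacle and the step most sensitive to the precise bookkeeping.
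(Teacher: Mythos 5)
The paper does not actually prove this theorem: it is quoted as a result of Shelah, with only the remark that the forcing connected to it is ``quite nice, but it is not $\sigma$-lattice''. So your attempt must be judged on its own merits, and as it stands it is not a proof but a plan with the decisive step missing --- and you say so yourself (``the hard part---and the real content of the theorem---is precisely the matching in the last two steps''). Identifying the main obstacle is not the same as overcoming it. The preliminary reductions are fine: the pigeonhole on the thresholds $i_\alpha$ uses the $\aleph_2$-completeness of $\NS_{\omega_2}$ correctly, and Fodor applies to $\alpha\mapsto c_\alpha(i^*)$ as you say. But everything after that is a hope, not an argument.

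Two concrete failures. First, the diagonalization mechanism does not work against an arbitrary $G$: whatever pattern your construction places on a trace $e_\xi$, it is a \emph{fixed} pattern chosen before $G$, and an adversary can simply take $G$ to copy it --- indeed, for any single $\alpha$ there trivially exists $G$ agreeing with $f_\alpha$ on all of $C_\alpha$. Your device of ``infinitely many sign changes dictated by $\xi$ itself, so that no fixed $G$-value can agree'' only defeats $G$'s that are constant on $e_\xi$; the theorem requires defeating every $G$, and that can only come from a single $G$ having to serve all $\alpha\in S^2_1$ simultaneously --- conflicting demands at shared points, or a counting/coherence argument across different $\alpha$'s. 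Your sketch contains no such mechanism: each $e_\xi$ is played against one $\alpha$ at a time. Second, the geometry is off: the club-guessing sequence on $S^2_0$ is fixed in advance, while $\tup{C_\alpha}$ is an arbitrary \emph{given} club sequence, so $e_\xi$ need not meet $C_\alpha$ cofinally in $\xi$ at all (guessing $e_\xi\subseteq D$ for the closure club $D$ of $G$ says nothing about $C_\alpha$); moreover, since $\otp(C_\alpha)$ has uncountable cofinality, any cofinality-$\omega$ trace sits at a \emph{bounded} place of $C_\alpha$, whereas the hypothesis you must contradict only constrains $f_\alpha$ on a tail. (Also, minor: the cofinality-$\omega$ limit points of $C_\alpha$ are unbounded and $\sigma$-closed in $\alpha$ but not a club --- an $\omega_1$-limit of them has cofinality $\omega_1$.) Your final appeal to ``enough room to absorb the countable traces cofinally'' is precisely the assertion that would need proof, and it is doubtful as stated: a countable set is never cofinal in a set of uncountable cofinality, so no arrangement of the construction can place the engineered colours on a tail of $C_\alpha$ via these traces.
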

And the forcing that proves this is quite nice, but it is not $\sigma$-lattice.
\section{Literature survey}
The very last session of the workshop was dedicated to a literature survey, from specific notes and papers, to simply theorems one should know about. This list is far from being complete, and many papers have surely been omitted. Nevertheless, we hope it will serve as a starting point for those interested in the topics of higher forcing axioms.

We already included several references in the footnotes of this work, and we will not include them in this list as well.

\subsection{Side conditions}
\begin{itemize}
\item Itay Neeman, \textbf{Forcing with sequences of models of two types}. \textit{Notre Dame J.\ Form.\ Log.} \textbf{55} (2014), no.~2, 265--298. \doi{10.1215/00294527-2420666}
\item Rahman Mohammadpour and Boban Velickovic, \textbf{Guessing models and the approachability ideal}. \textit{Preprint}, \textbf{\arxiv{1802.10125}} (2018).
\end{itemize}
\subsection{Baumgartner's Axioms}
\begin{itemize}
\item James E.\ Baumgartner, \textbf{Iterated forcing}. Surveys in set theory, 1--59,
\textit{London Math.\ Soc.\ Lecture Note Ser., 87} (1983), ed.\ A.R.D.\ Mathias.
\item Franklin D.\ Tall, \textbf{Some applications of a generalized Martin's axiom}. \textit{Topology Appl.} \textbf{57} (1994), no.~2-3, 215--248. \doi{10.1016/0166-8641(94)90051-5}
\item William Weiss, \textbf{The equivalence of a generalized Martin's axiom to a combinatorial principle}. \textit{J.\ Symbolic Logic} \textbf{46} (1981), no.~4, 817--821. \doi{10.2307/2273230}
\item Saharon Shelah, \textbf{A weak generalization of MA to higher cardinals}. \textit{Israel J.\ Math.} \textbf{30} (1978), no.~4, 297--306. \doi{10.1007/BF02761994}
\end{itemize}
\subsection{Symmetric systems}
\begin{itemize}
\item David Asper\'o and Miguel Angel Mota, \textbf{Forcing consequences of PFA together with the continuum large}. \textit{Trans.\ Amer.\ Math.\ Soc.} \textbf{367} (2015), no.~9, 6103--6129. \doi{10.1090/S0002-9947-2015-06205-9}
\end{itemize}
\subsection{Other type of forcing axioms}
\begin{itemize}
\item John Krueger and Miguel Angel Mota, \textbf{Coherent adequate forcing and preserving CH}. \textit{J.\ Math.\ Log.} \textbf{15} (2015), no.~2, 1550005, 34 pp. \doi{10.1142/S0219061315500051}
\item Gunter Fuchs, \textbf{Hierarchies of forcing axioms, the continuum hypothesis and square principles}. \textit{J.\ Symb.\ Log.} \textbf{83} (2018), no.~1, 256--282. \doi{10.1017/jsl.2017.46}
\item Gunter Fuchs, \textbf{The subcompleteness of Magidor forcing}. \textit{Arch.\ Math.\ Logic} \textbf{57} (2018), no.~3-4, 273--284. \doi{10.1007/s00153-017-0568-1}
\item David Chodounsk\'y and Jind\v{r}ich Zapletal, \textbf{Why Y-c.c.} \textit{Ann.\ Pure Appl.\ Logic} \textbf{166} (2015), no.~11, 1123--1149. \doi{10.1016/j.apal.2015.07.001}
\item Uri Abraham, \textbf{Lecture notes on the P-ideal dichotomy}. \textit{Winter School in Abstract Analysis} \textbf{2009}. \href{http://artax.karlin.mff.cuni.cz/~vernj1am/download/ws09/PID.pdf}{Link}
\end{itemize}
\subsection{Forcing axioms on uncountable cardinals}
\begin{itemize}
\item Dorottya Sziraki, \textbf{Colorings, Perfect Sets and Games on Generalized Baire Spaces}. Ph.D.\ dissertation, \textit{Central European University, Budapest, Hungary} (2018). \href{https://sierra.ceu.edu/record=b1400414}{Link}
\item Philipp Schlicht and Dorottya Sziraki, \textbf{Open graphs and hypergraps on definable subsets of generalized Baire spaces}. In preparation. \href{http://karagila.org/mehifox/CLMPST2019_Sziraki.pdf}{Slides}
\item Todd Eisworth, \textbf{On iterated forcing for successors of regular cardinals}. \textit{Fund.\ Math.} \textbf{179} (2003), no.~3, 249--266. \doi{10.4064/fm179-3-4}
\item Andrzej Ros{\L}anowski and Saharon Shelah, \textbf{Iteration of $\lambda$-complete forcing notions not collapsing $\lambda^+$}. \textit{Int.\ J.\ Math.\ Math.\ Sci.} \textbf{28} (2001), no.~2, 63--82. \doi{10.1155/S016117120102018X}
\end{itemize}
\subsection{Club guessing and uniformisation}
\begin{itemize}
\item Assaf Rinot, \textbf{The uniformization property for $\aleph_2$}. (2012). \href{https://blog.assafrinot.com/?p=4515}{Link}
\end{itemize}
\end{document}